\documentclass[12pt]{amsart}

\usepackage{amsmath,amsbsy,amssymb,mathrsfs, amsthm, color}
\usepackage[normalem]{ulem}
\usepackage{graphicx}
\usepackage{latexsym}
\usepackage{epsfig}
\usepackage{amsfonts}
\usepackage[margin=2cm]{geometry}

\newtheorem{theorem}{Theorem}[section]
\newtheorem{prop}[theorem]{Proposition}

\newtheorem{cor}[theorem]{Corollary}

\numberwithin{equation}{section}
\theoremstyle{definition}

\newcommand{\bb}{\backslash}
\newcommand{\ord}{\mbox{ord}}
\newcommand{\diag}{{\rm diag}}
\newcommand{\PGL}{\rm{PGL}}
\newcommand{\GL}{\rm{GL}}
\newcommand{\Sp}{\rm{Sp}}
\newcommand{\SL}{\rm{SL}}
\newcommand{\GSp}{\rm{GSp}}
\newcommand{\PGSp}{\rm{PGSp}}
\newcommand{\St}{\rm{St}}
\newcommand{\oo}{\mathcal{O}}
\newcommand{\pp}{\mathcal{P}}
\newcommand{\I}{\mathcal{I}}
\newcommand{\J}{\mathcal{J}}

\newcommand{\B}{\mathcal{B}}

\newcommand{\Tr}{\operatorname{Tr}}
\newcommand{\triv}{{\mathbf1}}
\newcommand{\f}{\frac}

\makeatletter
  \newcommand\figcaption{\def\@captype{figure}\caption}
  \newcommand\tabcaption{\def\@captype{table}\caption}
\makeatother

\begin{document}

\title{The Zeta Functions of Complexes from $\Sp(4)$}

\author{Yang Fang, Wen-Ching Winnie Li and Chian-Jen Wang}

\address{Yang Fang\\ Department of Mathematics\\ The Pennsylvania State University\\
University Park, PA 16802} \email{\tt yuf1983@gmail.com}

\address{Wen-Ching Winnie Li\\ Department of Mathematics\\ The Pennsylvania State University\\
University Park, PA 16802 \\ and
 Division of Mathematics\\ National Center for Theoretical Sciences \\ Hsinchu, Taiwan} \email{\tt wli@math.psu.edu}

\address{Chian-Jen Wang\\ Department of Mathematics\\ National Central University\\
Chungli, Taiwan} \email{\tt cwang@math.ncu.edu.tw}

\thanks{The first author was partially supported by the NSF grant DMS-0801096, the second author is supported
by the NSF grants  DMS-0801096 and DMS-1101368,
and the third author is partially supported by the NSC grant 99-2115-M-008-001. Part of this work was done when the first author was visiting
the National Center for Theoretical Sciences in Hsinchu, Taiwan. She would like to thank NCTS for
its hospitality.}

\begin{abstract} Let $F$ be a  non-archimedean local field with a finite residue field. To a $2$-dimensional finite complex $X_\Gamma$ arising as the quotient of  the Bruhat-Tits building $X$ associated to $\Sp_4(F)$ by a  discrete torsion-free cocompact subgroup $\Gamma$ of $\PGSp_4(F)$, associate the zeta function $Z(X_{\Gamma}, u)$ which counts geodesic tailless cycles contained in the $1$-skeleton of $X_{\Gamma}$.  Using a representation-theoretic approach,
we obtain two closed form expressions for $Z(X_{\Gamma}, u)$ as a rational function in $u$. Equivalent statements for $X_{\Gamma}$ being a Ramanujan complex are given in terms of
vertex, edge, and chamber adjacency operators, respectively. The zeta functions of
such Ramanujan  complexes are distinguished by satisfying the Riemann Hypothesis.
\end{abstract}

\maketitle

\section{Introduction}
Ever since its introduction by Ihara \cite{Ih}, the zeta function of graphs has been well studied for more than 40 years. Defined similar to the zeta function of curves over finite fields, the zeta function of a graph $\mathbb{X}$ counts the number $N_n$ of tailless geodesic cycles in $\mathbb{X}$ of length $n$, and it also has an Euler product expression:
$$Z(\mathbb{X}, u) = \exp (\sum_{n \ge 1} \frac {N_n}{n} u^n) = \prod_{[C]} (1 - u^{l([C])})^{-1},$$
where the product is over equivalence classes $[C]$ of tailless primitive geodesic cycles $C$, and $l([C])$ is the length of a cycle in $[C]$.
The combined results of Ihara \cite{Ih} and Hashimoto \cite{Ha} combined show that $Z(\mathbb{X},u)$ is a rational function providing topological and spectral information of the graph, and it can be expressed in terms of the vertex adjacency operator $A$ and the edge adjacency operator $A_e$, respectively. When $\mathbb{X}$ is $(q+1)$-regular, its zeta function has the following form:
\begin{eqnarray}\label{graphzeta}
Z(\mathbb{X},u) = \frac{(1 - u^2)^{\chi(\mathbb{X})}}{\det(I - Au + qu^2I)} = \frac{1}{\det(I - A_e u)},
\end{eqnarray}
where $\chi(\mathbb{X})$ is the Euler characteristic of $\mathbb{X}$.
Furthermore, $Z(\mathbb{X},u)$ satisfies the Riemann Hypothesis in the sense that all non-trivial poles have the same absolute value if and only if $\mathbb{X}$ is spectrally optimal, called a Ramanujan graph in Lubotzky-Phillips-Sarnak \cite{LPS}.

Graphs are $1$-dimensional complexes. It is natural to search for zeta functions of finite higher dimensional complexes with similar features. When $q$ is a prime power, a finite $(q+1)$-regular graph can be identified with a finite quotient of the Bruhat-Tits tree $\B_2$ associated to $\SL_2$ over a nonarchimedean local field $F$ with $q$ elements in its residue field. Analogous higher dimensional complexes would be finite quotients of the Bruhat-Tits buildings associated to $p$-adic Chevalley groups; such a building is simply connected and the underlying group facilitates algebraic parametrizations of the geometric objects in the building and its finite quotients so that the whole setting is amenable to group representations. Further, a cycle in a quotient is called a geodesic if it lifts to a geodesic in the building, namely a straight line in an apartment. A zeta function will count tailless geodesic cycles contained in the $1$-skeleton of the complex.

A closed-form expression for the zeta function $Z(Y,u)$ of a two-dimensional finite
complex $Y$ which arises as a quotient of the building $\mathcal B_3$ associated to $\SL_3(F)$
by a discrete torsion-free co-compact subgroup $\Gamma$ in $\PGL_3(F)$ was obtained by Kang and Li \cite{KL} using combinatorial and group theoretic approach, and Kang-Li-Wang \cite{KLW} using representation-theoretic approach. It was shown in \cite{KL} and \cite{KLW} that $Z(Y, u)$ can be expressed as a rational function in two ways: one in terms of the two vertex adjacency operators $A_1$ and $A_2$ and the chamber adjacency operator $L_B$ on $Y$, and the other in terms of
the edge adjacency operator $L_{E}$ and its transpose $L_E^t$ on $Y$ as follows:
\begin{equation}\label{GL3zeta}
\begin{aligned}
Z(Y, u) &=  \frac{(1-u^3)^{\chi(Y)}}{\det(I-A_1u+qA_2u^2-q^3
u^3I)\det(I + L_Bu)} \\
&= \frac{1}{\det(I-L_{E}u)\det(I - L_E^tu^2)},
\end{aligned}
\end{equation}
where $\chi(Y)$ is the Euler characteristic of $Y$.
The concept of Ramanujan graphs was extended to Ramanujan complexes in Li \cite{Li1}, and by the definition therein, $Y$ is a Ramanujan complex if and only if all nontrivial zeros of $\det(I-A_1u+qA_2u^2-q^3 u^3I)$ have the same absolute value $q^{-1}$. In \cite{KLW}, equivalent statements for $Y$ being Ramanujan were given in terms of the eigenvalues of the edge operator $L_E$ and those of the chamber operator $L_B$, respectively.

The aim of this paper is to study the zeta functions of $2$-dimensional finite complexes associated to $\Sp_4(F)$.
More precisely, given a discrete torsion-free co-compact subgroup $\Gamma$ of $\PGSp_4(F)$, consider the zeta function of the $2$-dimensional complex $X_\Gamma$ arising as the quotient by $\Gamma$ of the Bruhat-Tits building $X$ associated to $\Sp_4(F)$. Since $X$ contains both special and non-special vertices, the group $\PGSp_4(F)$ does not act transitively on the vertices or edges of $X$, which is different from the action of $\PGL_3(F)$ on $\B_3$. Nonetheless, the vertices, edges and directed chambers of $X_\Gamma$ can be parametrized by certain cosets of $\PGSp_4(F)$.
Using representation theoretic approach analogous to \cite{KLW}, we prove that similar statements hold for $Z(X_\Gamma, u)$. Indeed, Theorems \ref{zeta1} and \ref{zeta2} combined give the following result.

\begin{theorem} Assume $\ord_{\pi}  \det (\Gamma) \subset 4 \mathbb Z$. The zeta function $Z(X_\Gamma, u)$ is a rational function expressed in two ways, the first in terms of the two vertex adjacency operators $A_1$ and $A_2$ and the chamber adjacency operator $L_I$, and the second in terms of the two edge adjacency operators $L_{P_1}$ and $L_{P_2}$, as follows:
\begin{equation}\label{GSp4zeta}
\begin{aligned}
Z(X_\Gamma, u) &= \frac{(1-u^2)^{\chi(X_{\Gamma})}(1-q^2 u^2)^{2N_p-N_{ns}}}{\det(I-A_1 u+qA_2 u^2-q^3 A_1 u^3+q^6I u^4)\det(I-L_I u)}\\
&= \frac{1}{\det(I-L_{P_1} u)\det(I-L_{P_2}u^2)},
\end{aligned}
\end{equation}
where $\chi(X_{\Gamma})$ is the Euler characteristic of $X_{\Gamma}$, and $2N_p$ (resp. $N_{ns}$) is the number of special (resp. non-special) vertices in $X_{\Gamma}$.
\end{theorem}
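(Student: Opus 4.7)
The plan is to follow the representation-theoretic strategy of Kang-Li-Wang \cite{KLW}, adapted to the richer combinatorial structure of the $\Sp_4$ building. In contrast to $\B_3$, the building $X$ of $\Sp_4(F)$ has vertices of three types, reflecting the three node types on the extended Dynkin diagram of type $\tilde C_2$: two $\PGSp_4(F)$-orbits of special vertices and one orbit of non-special vertices. I would first fix the Iwahori subgroup $I$, the two intermediate parahorics corresponding to the two edge types, and the three maximal parahorics corresponding to the three vertex types, and parametrize the cells of $X_\Gamma$ by the corresponding double cosets $\Gamma\backslash \PGSp_4(F)/P$. The hypothesis $\ord_{\pi} \det(\Gamma) \subset 4\mathbb Z$ ensures that $\Gamma$ preserves each of the three vertex types in $X$, so this parametrization is well-defined and the counts $2N_p$ and $N_{ns}$ are intrinsic to $X_\Gamma$.

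To establish the edge expression, I would count closed tailless geodesic cycles directly. Taking the logarithmic derivative of the Euler product reduces the problem to evaluating $\sum_{n\ge 1} \frac{u^n}{n}\Tr(T^n)$ for a transfer operator $T$ on oriented edges that encodes the rule that a geodesic in $X_\Gamma$ must lift to a straight line in an apartment of $X$ of type $\tilde C_2$. An analysis of straight lines in such an apartment shows that they alternate between the two edge types in a combinatorially controlled way: one type contributes with weight $u$ and the other with weight $u^2$, and the transfer operator factors accordingly into $L_{P_1}$ and $L_{P_2}$ pieces. Summing the geometric series yields
\[
Z(X_\Gamma, u) = \frac{1}{\det(I - L_{P_1} u)\det(I - L_{P_2} u^2)},
\]
which is the second formula of the theorem.

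To derive the first formula, I would work inside the Iwahori-Hecke algebra $\mathcal{H}_I$ of $\PGSp_4(F)$. Under the isomorphism $L^2(\Gamma\backslash \PGSp_4(F))^I \cong \bigoplus_\pi m_\pi\, \pi^I$, each of the operators $A_1, A_2, L_I, L_{P_1}, L_{P_2}$ acts on every $\pi^I$ as a fixed element of $\mathcal{H}_I$. For each irreducible constituent $\pi$ I would establish polynomial identities between the characteristic polynomials of $L_{P_1}, L_{P_2}$ on $\pi^I$ and those of $A_1, A_2, L_I$; on generic (fully induced) principal series these identities produce the vertex polynomial $\det(I - A_1 u + qA_2 u^2 - q^3 A_1 u^3 + q^6 I u^4)$ together with the chamber factor $\det(I - L_I u)$ in the denominator, while degenerate constituents --- the trivial and Steinberg representations and the two intermediate subquotients at the reducibility points of $\Ind$ from the Borel --- contribute the numerator corrections $(1 - u^2)^{\chi(X_\Gamma)}$ and $(1 - q^2 u^2)^{2N_p - N_{ns}}$ with the stated exponents.

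The main obstacle I anticipate is precisely this Hecke-algebra bookkeeping. The principal series of $\Sp_4(F)$ has several reducibility hyperplanes, each producing proper subrepresentations with nonzero Iwahori-fixed vectors on which some of the edge or vertex operators fail to be invertible. Identifying each such constituent's contribution to the exponent $2N_p - N_{ns}$ of $(1 - q^2 u^2)$ as opposed to the topological exponent $\chi(X_\Gamma)$ of $(1 - u^2)$ requires a careful case-by-case computation of the eigenvalues of $L_{P_1}, L_{P_2}, L_I$ on each Iwahori-fixed subspace --- substantially more delicate than the single-reducibility analysis needed for $\PGL_3$ in \cite{KLW}.
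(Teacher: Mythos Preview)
Your overall strategy matches the paper's: prove the edge expression by a direct counting argument, then prove the equality of the two expressions by decomposing $L^2(\Gamma\backslash G/Z)$ into irreducible Iwahori-spherical representations and computing the eigenvalues of $A_1,A_2,L_{P_1},L_{P_2},L_I$ on each. Two points need correction or sharpening.

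First, your description of the edge expression contains a genuine error. Geodesics in the $1$-skeleton of a $\tilde C_2$ apartment do \emph{not} alternate between the two edge types; each geodesic consists entirely of type~$1$ edges (hypotenuses, joining special vertices of the two kinds) or entirely of type~$2$ edges (legs, joining special to non-special vertices). This homogeneity is exactly why the zeta function splits as a product $Z(X_\Gamma,u)=Z_1(X_\Gamma,u)\,Z_2(X_\Gamma,u)$ with $Z_i$ involving only $L_{P_i}$; an alternating structure would instead give a single determinant of a block operator. The weight $u^2$ attached to $L_{P_2}$ arises for a different reason: along a pure type~$2$ geodesic the directed edges alternate between special$\to$non-special and non-special$\to$special, and $L_{P_2}$ is defined so that one application records two consecutive edges (one of each orientation). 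Thus $\Tr L_{P_2}^n$ counts half the type~$2$ tailless closed geodesics of length $2n$, which yields $\det(I-L_{P_2}u^2)^{-1}$.

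Second, you substantially underestimate the case analysis for the first expression. The unitary Iwahori-spherical representations of $\PGSp_4(F)$ are not just the trivial, Steinberg, and ``two intermediate subquotients'': Schmidt's classification gives fifteen subtypes (I, IIa/b, IIIa/b, IVa/d, Va--Vd, VIa--VId), and the paper computes the eigenvalues of all five operators on each. Moreover, a step you do not mention is indispensable: one must relate the multiplicities of certain types in $L^2(\Gamma\backslash G/Z)$ to the combinatorial quantities $\chi(X_\Gamma)$ and $N_p$. The paper does this via an alternating-sum identity on the dimensions $\dim V^K,\dim V^{P_{02}},\dim V^{P_1},\dim V^{P_2},\dim V^I$ across the fifteen types, obtaining $m_{\mathrm{IVa}}=2\chi(X_\Gamma)-2$ and a second linear relation that produces the exponent $-(q^2-1)N_p=2N_p-N_{ns}$ of $(1-q^2u^2)$. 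Without this multiplicity computation the representation-theoretic bookkeeping does not connect to the stated topological exponents.
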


The vertices of $X_{\Gamma}$ are classified into three types: primitive special, non-primitive special, and non-special. The edges of $X_{\Gamma}$ are classified into two types: those connecting primitive and non-primitive special vertices are of type $1$, and those connecting special and non-special vertices are of type $2$. See \S2 for detailed descriptions of vertices, edges, chambers of $X_{\Gamma}$ and their parametrizations.
The operators $A_1$ and $A_2$ in Theorem 1.1 describe adjacency among special vertices; they can be interpreted as Hecke operators on the space of $\PGSp_4(\oo)$-invariant vectors in $L^2(\Gamma \backslash \PGSp_4(F))$ endowed with right regular actions by $\PGSp_4(F)$. Here $\oo$ denotes the ring of integers in $F$.
When restricted to an irreducible unramified representation $(\rho, V_{\rho})$ occurring in $L^2(\Gamma \backslash \PGSp_4(F))$, $\det(I-A_1 u+qA_2 u^2-q^3 A_1 u^3+q^6I u^4)$ in (\ref{GSp4zeta}) with $u$ replaced by $q^{-s}$ gives the  reciprocal of the degree four spin $L$-function $L(s-\frac{3}{2}, \rho)$.
The operator $L_{P_1}$ describes adjacency among directed edges of type $1$, while $L_{P_2}$ describes adjacency among type $2$ edges. See \S 3 for algebraic and combinatorial descriptions of these adjacency operators.

Extending the definition of Ramanujan complexes from $\SL_n(F)$ to $\Sp_4(F)$, we call a finite quotient $X_\Gamma$ Ramanujan if all infinite-dimensional irreducible representations occurring in $L^2(\Gamma \backslash \PGSp_4(F))$ containing nontrivial $\PGSp_4(\oo)$-invariant vectors are tempered.  Similar to the case of $\SL_3$, the Ramanujan condition can be re-interpreted in terms of eigenvalues of the operators acting on vertices, edges, and directed chambers, respectively; see \S7 for details. When this happens, the zeta function $Z(X_\Gamma, u)$ is said to satisfy the Riemann Hypothesis.

\begin{theorem} The following statements are equivalent:

$(1)$ $X_{\Gamma}$ is Ramanujan.

$(2)$ All the nontrivial zeros of $\det(I-A_{1}u+qA_{2}u^{2}-q^{3}A_{1}u^{3}+q^{6}Iu^{4})$ have absolute value $q^{-\frac{3}{2}}$.

$(3)$ All the nontrivial zeros of $\det(I - L_{P_{1}} u)$ have absolute values $q^{-\frac{3}{2}}$ or $q^{-1}$.

$(4)$ All the nontrivial zeros of $\det(I - L_{P_{2}} u)$ have absolute values $q^{\alpha}$, where $-2 \le \alpha \le -1$.

$(5)$ All the nontrivial zeros of $\det(I - L_{I} u)$ have absolute values $1$ or $q^{\beta}$, where $-1 \le \beta \le -\frac{1}{2}$.

\end{theorem}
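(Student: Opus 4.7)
The overall strategy is to translate each of the five conditions into statements about the Satake (or inducing) parameters of the irreducible representations of $\PGSp_4(F)$ occurring in $L^2(\Gamma\backslash\PGSp_4(F))$ that admit nontrivial parahoric-fixed vectors. Because the operators $A_1,A_2,L_{P_1},L_{P_2},L_I$ arise as convolution operators in the corresponding parahoric Hecke algebras acting on the spaces of parahoric-invariant vectors, the decomposition of $L^2(\Gamma\backslash\PGSp_4(F))$ into irreducible $\PGSp_4(F)$-representations factors each adjacency determinant as a product over those irreducible $\pi$ whose relevant parahoric invariants are nonzero. The problem is thus reduced to a representation-by-representation analysis.

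For (1) $\Leftrightarrow$ (2) I would evaluate the polynomial $1-A_1u+qA_2u^2-q^3A_1u^3+q^6u^4$ on the one-dimensional space of $\PGSp_4(\oo)$-invariants in an irreducible unramified $\pi$ appearing in the decomposition. As the introduction already records, this polynomial (after $u\mapsto q^{-s}$) is the reciprocal of the degree-$4$ spin $L$-factor $L(s-3/2,\pi)$, so its zeros are $q^{-3/2}\alpha^{\pm 1}\beta^{\pm 1}$ for the Satake parameters $\alpha,\beta$ of $\pi$. Temperedness of such a spherical $\pi$ is equivalent to $|\alpha|=|\beta|=1$, which yields the equivalence after taking the product over $\pi$.

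For (1) $\Leftrightarrow$ (3), (4), (5) I would invoke Borel--Casselman to realize $V_\pi^{P_1},V_\pi^{P_2},V_\pi^I$ as modules over the corresponding (generalized) affine Hecke algebras and enumerate their irreducible constituents via the Langlands and Kazhdan--Lusztig classifications. The operators $L_{P_1},L_{P_2},L_I$ restrict to these finite-dimensional modules, and their eigenvalues can be read off from the inducing character of each constituent. Using the classification of the unitary unramified dual of $\PGSp_4(F)$ (following Sally--Tadi\'c), one then verifies that the zeros of $\det(I-L_{P_1}u)$ have absolute value $q^{-3/2}$ or $q^{-1}$ precisely when every contributing $\pi$ is tempered, and similarly for $L_{P_2}$ and $L_I$ with the absolute-value ranges stated in (4) and (5).

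The main obstacle will be treating the non-spherical but parahoric-spherical representations---those $\pi$ with no $\PGSp_4(\oo)$-fixed vectors but with nonzero $P_1$-, $P_2$-, or Iwahori-fixed vectors---which include the various Langlands quotients, tempered subquotients, and complementary-series deformations of unramified principal series of $\PGSp_4$. For each such family one must extract the eigenvalues of $L_{P_1},L_{P_2},L_I$ on every constituent and verify that the resulting determinant zeros fit the prescribed ranges. The delicate direction is the converse implication for (3), (4) and (5): showing that an eigenvalue producing a zero outside the stated ranges forces some $\pi$ to be non-tempered, which requires ruling out certain complementary-series parameters via explicit bounds extracted from the Sally--Tadi\'c list.
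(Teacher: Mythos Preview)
Your plan is correct and follows essentially the same route as the paper: decompose $L^2(\Gamma\backslash G/Z)$ into irreducible Iwahori-spherical unitary representations, compute the eigenvalues of $A_1,A_2,L_{P_1},L_{P_2},L_I$ on each constituent, and then read off the equivalences from the resulting table together with the Sally--Tadi\'c unitarity constraints. The only methodological difference is that the paper bypasses the abstract Kazhdan--Lusztig machinery you mention and instead computes the eigenvalues by brute force in Schmidt's explicit induced models, using the coset decompositions of the relevant double cosets; the eigenvalue list is then tabulated type by type and the theorem follows by inspection.
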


It is interesting to compare the zeta function identities (\ref{graphzeta}), (\ref{GL3zeta}) and (\ref{GSp4zeta}) for a graph, a $2$-dimensional complex from $\B_3$, and a $2$-dimensional complex from $X$ to observe how the zeta identities evolve. Each determinant has a combinatorial meaning and a representation-theoretic interpretation. In these three cases, the determinant of operators on vertices occurring in the combinatorial zeta function, denoted by $P_v(u)$ for $u=q^{-s}$, is the reciprocal of a product of local $L$-factors attached to
unramified representations. Further, in the case of $\SL_2$ (resp.  $\SL_3$), for certain choices of $\Gamma$, the polynomial $P_v(u)$ is closely related to the zeta function of a curve (resp.  surface)  defined over $\mathbb F_q$, as shown in Shimura \cite[Theorem 7.11]{Shi} and Hashimoto \cite{Ha} for curves and Laumon-Rapoport-Stuhler \cite{LRS} and Li \cite{Li1, Li2} for surfaces.
It would be interesting to know whether similar phenomenon holds for $\Sp_4$ case. As the building for $\Sp_4(F)$ is the simplest of general case, it is hoped that our results will shed some light for combinatorial zeta functions in general.
\smallskip

The authors would like to thank Ralf Schmidt for helpful communications.

\section{Preliminaries and notation}

Let $F$ be a non-archimedean local field with ring of integers $\oo$ and maximal ideal $\pp$. Suppose the residue field $k=\oo / \pp$ has finite order $q$. Fix a uniformizer $\pi$ in $\pp$. Most of the results in this section can be found in Garrett \cite{Ga}, Setyadi \cite{Se1}, and Shemanske \cite{Sh}.

\subsection{The groups $\Sp_4(F)$ and $\GSp_4(F)$}

Let $V$ be a 4-dimensional vector space over $F$, equipped with the non-degenerate alternating bilinear form $\langle\ ,\ \rangle$, given on the standard basis $\{e_1, e_2, f_1, f_2\}$ of $V$ according to
\begin{equation}\label{e:bilinear}
\begin{split}
\langle e_1, f_2 \rangle = \langle e_2, f_1 \rangle &=1, \text{  and} \\
\langle e_i, f_j \rangle = \langle e_i, e_j \rangle &= \langle f_i, f_j \rangle =0  \text{ for } 1 \le i= j \le 2.
\end{split}
\end{equation}
An ordered basis of $V$ satisfying the above conditions
is called a symplectic basis.

The linear transformations on $V$ preserving the bilinear form $ \langle \ ,\  \rangle$ form the symplectic group $\Sp_4(F)$, which can be identified with the subgroup of elements $g$ in $ \GL_4(F)$ satisfying $g^t J g=J$, where
$$J=\left( \begin{matrix} 0&0&0&1\\ 0&0&1&0\\ 0&-1&0&0\\ -1&0&0&0 \end{matrix}\right).$$
Those linear transformations preserving $ \langle \ ,\  \rangle$ up to a scalar multiple form the group $\GSp_4(F)$ of symplectic similitudes; it consists of elements $g \in \GL_4(F)$ that satisfy
$g^t J g=\lambda(g) J$ for some $\lambda(g)$ in $F^\times$. The center $Z$ of $\GSp_4(F)$ has elements of the form $\diag(z, z, z, z)$
with $z \in F^\times$. The projective symplectic similitude group $\PGSp_4(F)$ is defined as the quotient of $\GSp_4(F)$ by $Z$.

\subsection{The building $X$ associated to $\Sp_4(F)$}
We begin by recalling definitions concerning lattices.
A lattice $L$ inside V is a rank four free $\oo$-module. It is called {\it primitive} if $\langle L, L \rangle \subset \oo$ and the bilinear form $ \langle \ ,\  \rangle$ induces a non-degenerate alternating form on the vector space $L/{\pi L}$ over $k$. A lattice $L'$ is {\it incident} to a primitive lattice $L$ if $\pi L \subset L' \subset L$ and $\langle L', L' \rangle \subset \pp$. Two lattices $L$ and $L'$ are {\it homothetic} if $L'=\alpha L$ for some $\alpha \in F^\times$; denote by $[L]$ the homothety class of $L$.

\subsubsection{Vertices, edges, and chambers}
The Bruhat-Tits building $X$ of the symplectic group $\Sp_4(F)$ is a two dimensional simplicial complex, which can be realized as a flag complex associated to certain incidence geometry. Under this realization, the vertices of $X$ are the homothety classes of primitive lattices and
those incident to primitive ones (cf. \cite{Ga}). Two vertices in $X$ are connected by an edge if they can be represented by two lattices $L'$ and $L''$ with one contained in the other and there is a primitive lattice $L$ such that $\pi L \subset L', L'' \subset L$.
Three vertices form a chamber if they can be represented by lattices $L$, $L'$, and $L''$ so that $L$ is primitive
and $\pi L \subset L' \subset L'' \subset L$. Each chamber corresponds to a maximal simplex in $X$. A symplectic basis $\{u_1, u_2, w_1, w_2\}$ of $V$ specifies an apartment of $X$ in which all the vertices can be expressed in terms of $[\oo \pi^{a_1}u_1+\oo \pi^{a_2}u_2+\oo \pi^{b_1}w_1+\oo \pi^{b_2}w_2]$ for some $a_i, b_i \in \mathbb{Z}$ (cf. \cite[Lemma 3.1]{Se1}). The building $X$ is the union of all such apartments.

Let $L_0$ be the lattice $\oo e_1 + \oo e_2 + \oo f_1 +\oo f_2$, and $L_3=\oo e_1+\oo \pi e_2 +\oo \pi f_1 +\oo \pi f_2$ and $L_2=\oo e_1+\oo e_2 +\oo \pi f_1 +\oo \pi f_2$ be two sublattices. The homothety classes $[L_0]$, $[L_3]$, $[L_2]$ are vertices in $X$, and they lie in
the same apartment specified by the standard basis $\{e_1, e_2, f_1, f_2\}$. As $\pi L_0 \subset L_1 \subset L_2 \subset L_0$ and $L_0$ is primitive, these three vertices form a chamber $C$ which we fix as the fundamental chamber. The sides of $C$ form an isosceles right triangle with the edge connecting $[L_0]$ and $[L_2]$ as the hypotenuse. Figure 1 shows the fundamental apartment $\Sigma_0$ determined by
$\{e_1, e_2, f_1, f_2\}$. For brevity, we write $[a_1, a_2, b_1, b_2]$ for the homothety class $[\oo \pi^{a_1}e_1+\oo \pi^{a_2}e_2+\oo \pi^{b_1}f_1+\oo\pi^{b_2}f_2]$.

\begin{figure}
\begin{center}
\includegraphics{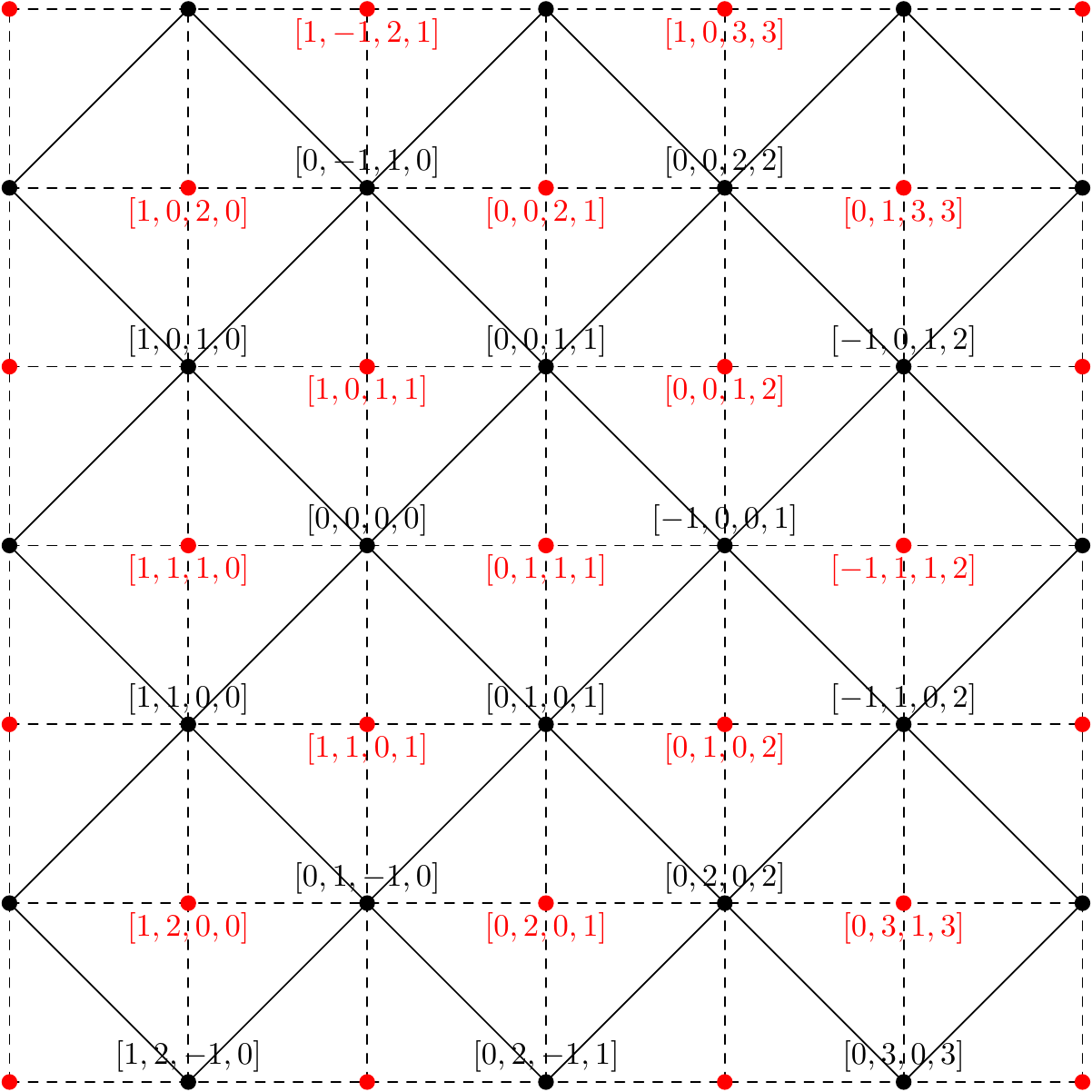}
\caption{Apartment $\Sigma_0$}
\end{center}
\end{figure}

\subsubsection{Types of vertices and edges}
If $[L]$ is a homothety class of lattices in $V$, then $[L]=[gL_0]$ for some $g \in \GL_4(F)$.  Define the type of $[L]$ to be
$\ord_{\pi}(\det g)$ modulo 4, which is independent of the choices of $g$ and $L$. It follows directly from the definition that if $g \in \GL_4(F)$ and $[L]$ has type $i$
modulo 4, then $g[L]:=[gL]$ has type $i+\ord_{\pi}(\det g)$ modulo 4.

When $[L]$ is a vertex in $X$, it can only have types 0, 2, or 3 modulo
4, so the vertices in $X$ are partitioned into three types.
Type 0 vertices, represented by primitive lattices, are classified as primitive vertices; vertices of other types are non-primitive. Both type 0 and type 2 vertices are special vertices (see Tits \cite{Ti} for precise definition), while type 3 vertices are non-special ones. None of the vertices with the same type are adjacent to each other. The symplectic group $\Sp_4(F)$ acts on vertices of $X$ by left translation; it is type-preserving.

If the vertex $[L]$ lies in the apartment specified by the symplectic basis $\{u_1, u_2, w_1, w_2\}$ so that it is written as $[\oo \pi^{a_1}u_1+\oo \pi^{a_2}u_2+\oo \pi^{b_1}w_1+\oo \pi^{b_2}w_2]$, then $[L]$ is special if and only if $a_1+b_2=a_2+b_1$; while $[L]$ is
primitive if and only if $a_1+b_2=a_2+b_1=0$. In particular,
$[L_0]=[0,0,0,0]$ is primitive, $[L_3]=[0,1,1,1]$ is non-special, and $[L_2]=[0,0,1,1]$ is special but not primitive. Further, the type of the vertex $[L_i]$ is $i$ for $i = 0,2,3$.

An edge in $X$ is said to be of type 1 if it connects two special vertices (one of
them must be primitive while the other is not); otherwise it connects a special vertex with a non-special one, and is of type 2. Thus a chamber contains one type $1$ edge and two type $2$ edges.

\subsection{Subgroups of $\GSp_4(F)$}
From now on, let $G=\GSp_4(F)$. We shall make $G$ act on $X$ and parametrize the simplices in $X$ algebraically.  Reviewed below are the subgroups
of $G$ we are concerned with.

The Borel subgroup $B$ of $G$ consists of upper triangular matrices in $G$, so an element in $B$ has the form $$\left(\begin{matrix}*&*&*&*\\&*&*&*\\&&*&*
\\&&&*\end{matrix}\right).$$
The Siegel parabolic subgroup $P$ and the Klingen parabolic subgroup $Q$ in $G$ have the forms
$$P=\left\{\left(\begin{matrix} *&*&*&*\\ *&*&*&* \\&&*&*\\&&*&*\end{matrix}\right)\right\} \text{ and } Q= \left\{\left(\begin{matrix}*&*&*&*\\&*&*&*\\&*&*&*\\&&&*\end{matrix}\right)\right\},$$
respectively. The Weyl group of $G$ has order 8, generated by $$s_1=\left(\begin{matrix} &1&&\\1&&&\\&&&1\\&&1&\end{matrix}\right) \text{ and }
s_2=\left(\begin{matrix} 1&&&\\&&1&\\&-1&&\\&&&1\end{matrix}\right).$$

There are two maximal compact subgroups in $G$ up to conjugacy. One is the group $K=\GSp_4(\oo)$, the other is the paramodular group $P_{02}$ consisting of  $g\in G$ such that
 $$ g\in \left(\begin{matrix}\oo & \oo & \oo &{\pp}^{-1} \\ \pp &\oo&\oo&\oo \\ \pp&\oo&\oo&\oo \\ \pp&\pp&\pp&\oo \end{matrix}\right) \text{ and } \det g \in {\oo}^\times. $$ The Iwahori subgroup $I$ consists of elements $g \in K$ which are upper triangular modulo $\pp$. The Siegel congruence subgroup $P_1=I \cup Is_1I$ and the Klingen subgroup
$P_2=I \cup I s_2 I $ are both subgroups in $K$. Modulo $\pp$, $P_1$ becomes the Siegel parabolic subgroup and $P_2$ the Klingen parabolic subgroup of $\GSp_4(k)$.

\subsection{Parametrization of the simplices in $X$}
Let $G_0$ be the index-2 subgroup of $G$ consisting of elements $g$ with $\ord_{\pi}(\det g) \equiv 0$ mod 4.  Put
$$\tau=\left(\begin{matrix} &&1& \\ &&&1 \\ \pi&&& \\&\pi&& \end{matrix}\right),$$
then $\tau^2 \in Z$, and $G=G_0 \cup G_0 \tau$. The group $G_0$ acts on the vertices of $X$ and preserves types,
while the element $\tau$ interchanges the primitive special vertices and the type 2 special ones. Note that
the lattice $L_1 = \pi^{-1}( \tau L_3)$ has type $1$ with
$\oo$-basis $\{e_1, e_2, f_1, \pi f_2 \}$, thus $\tau[L_3]=[L_1]$ is not a vertex of $X$; so
$\tau$ does not preserve non-special vertices. Define the dual lattice of $L$ to be $$L^*=\{v \in V | \langle v, w \rangle \in \oo \text{ for all } w \in L\}.$$
Note also that $[L_1]=[L_3^*]$, and $P_{02}Z=\tau P_{02}Z \tau^{-1}$ stabilizes both $[L_3]$ and $[L_3^*]$.

\subsubsection{Vertices} A vertex $[L]$ is called {\it self-dual} if $[L]=[L^*]$. It was shown in \cite[Corollary 3.4]{Sh} that self-dual vertices are precisely the special vertices. If $[L]=[gL_3]$ is a non-special
vertex with $g$ in $G_0$, then $[L^*]=[g\tau L_3]=[gL_3^*]$. We shall regard each vertex in $X$ labelled by $[L]$ and its dual $[L^*]$ so that $G$ acts transitively on special vertices (cf. \cite[Proposition 1.4]{Se2}) and non-special vertices respectively; in particular $\tau$ permutes $[L_0]$ and $[L_2]$, and stabilizes $[L_3]$. Since the stabilizer of $[L_0]$ in $G$ is $KZ$, the left cosets $\{gKZ\}_{g \in G}$ parametrize all the special vertices in $X$. More specifically, the cosets $\{g KZ\}_{g \in G_0}$ parametrize primitive vertices and the cosets $\{g\tau KZ\}_{g \in G_0}$ parametrize type 2 special vertices.
Let $\bar{P}_{02} = P_{02} \cup P_{02}\tau $ be the subgroup in $G$ generated by $P_{02}$ and $\tau$,  then the non-special vertices in $X$ are parametized by $\{g\bar{P}_{02}Z\}_{g \in G_0}$.

\subsubsection{Edges and chambers}
Edges between special vertices are of type 1, and those between special and non-special vertices are of type 2.
Each undirected edge gives rise to two directed edges. Denote by $E_1$ the directed type $1$ edge from $[L_0]$ to $ [L_2]$, and $E_2$ the directed type $2$ edge from $[L_0]$ to $[L_3]$.  Modulo $Z$, the Siegel congruence subgroup $P_1=K\cap \tau K{\tau}^{-1}$ is the  stabilizer of $E_1$, and hence the cosets $\{gP_1Z\}_{g \in G}$ parametrize directed type 1 edges in $X$. Similarly, up to $Z$,
the Klingen congruence subgroup $P_2=K\cap P_{02} = K\cap \bar{P}_{02}$ is the stabilizer of $E_2$ so that the cosets $\{gP_2Z\}_{g \in G}$ parametrize all directed type 2 edges from special vertices to non-special vertices, or simply all undirected type $2$ edges. Up to $Z$, the Iwahori subgroup $I=P_1 \cap P_2$ is the stabilizer  of the chamber $C$ along with the directed edge $E_1$, and the cosets $\{gIZ\}_{g \in G}$ parametrize the directed chambers, that is, chambers with oriented type $1$ edges. In particular, $\tau IZ$ represents the chamber $C$ whose type $1$ edge is opposite to $E_1$.

\subsection{The number of neighboring vertices}
We compute the number of neighbors of a vertex in $X$ of a given type. Because of the group action, it suffices to count these for the vertices $[L_0]$, $[L_2]$ and $[L_3]$. First, the type $2$ neighbors $[L_2']$ of $[L_0]$ can be identified with the type $2$ lattices $L_2'$ satisfying $\pi L_0 \subset L_2' \subset L_0$ such that $\langle L_2', L_2' \rangle = \pp$. The total number, by Proposition 1.7 in \cite{Se2}, is $q^3+q^2+q+1$. Apply $\tau$ to $[L_0]$ to conclude that $[L_2]$ has $q^3+q^2+q+1$ type $0$ neighbors. This shows that out of a special vertex there are $q^3+q^2+q+1$ type $1$ edges. The type $3$ neighbors of $[L_0]$ are represented by type $3$ lattices $L_3'$ satisfying $\pi L_0 \subset L_3' \subset L_0$; the total number is equal to the number of lines in $L_0/\pi L_0$, namely $q^3+q^2+q+1$. This is also the number of type $3$ neighbors of $[L_2]$ by applying $\tau$. Hence out of each special vertex there are $q^3+q^2+q+1$ type $2$ edges. Note that the type 2 neighbors $[L_2']$ of $[L_3]$ are represented by type $2$ lattices $L_2'$ containing $L_3$ and contained in $L_1$. So they correspond to the lines in the $2$-dimensional space $L_1/L_3$. This shows that each type $3$ vertex has $q+1$ type $2$ neighbors, and hence $q+1$ type $1$ neighbors by symmetry. So out of each non-special vertex there are $2(q+1)$ type $2$ edges and no type $1$ edges. We record the above discussion in the
following proposition.

\begin{prop}\label{vertex0} Each special vertex in $X$ is adjacent to $q^3+q^2+q+1$ special vertices of different type and $q^3+q^2+q+1$ non-special vertices, while each non-special vertex in $X$ is adjacent to $2(q+1)$ special vertices, half primitive and half type $2$ special.
\end{prop}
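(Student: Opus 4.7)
The plan is to reduce everything to counting at the three base vertices $[L_0]$, $[L_2]$, and $[L_3]$, using the transitive $G$-action on each type (primitive special, non-primitive special, non-special) established in \S2.4.1, and then to interpret each count as a count of sublattices or subspaces over the residue field.

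First I would handle the special vertices. For $[L_0]$, a type $2$ neighbor is represented by a lattice $L_2'$ with $\pi L_0 \subset L_2' \subset L_0$ satisfying $\langle L_2', L_2'\rangle \subset \pp$ together with the rank condition that makes its type equal to $2$. This is exactly the counting problem addressed in \cite[Proposition 1.7]{Se2}, which yields $q^3+q^2+q+1$. Applying $\tau$, which permutes $[L_0]$ and $[L_2]$ and preserves the adjacency relation, then gives the same count of type $0$ neighbors for $[L_2]$; together these give the stated number of type $1$ edges at each special vertex. For the type $3$ neighbors of $[L_0]$, I would observe that such a neighbor is represented by a lattice $L_3'$ with $\pi L_0 \subset L_3' \subset L_0$ and no further symplectic condition beyond the containment (since type $3$ vertices are non-special). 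Quotienting by $\pi L_0$ identifies these with the lines in the four-dimensional $k$-vector space $L_0/\pi L_0$, giving $(q^4-1)/(q-1) = q^3+q^2+q+1$. Applying $\tau$ transfers the same count to $[L_2]$.

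Next I would count the neighbors of $[L_3]$. A type $2$ neighbor $[L_2']$ must satisfy, after choosing representatives, the chain $L_3 \subset L_2' \subset L_1$, where $L_1 = \pi^{-1}(\tau L_3)$ is the dual lattice class representative introduced in \S2.4. Since $L_1/L_3$ is a two-dimensional $k$-vector space, the $q+1$ lines in it give exactly $q+1$ type $2$ neighbors of $[L_3]$. The duality $L \mapsto L^*$ (or equivalently the action of $\tau$, which stabilizes $[L_3]$ and swaps primitive with non-primitive special vertices) shows that the $q+1$ type $2$ special neighbors and the $q+1$ primitive special neighbors come in equal number, giving $2(q+1)$ in total and the advertised half-half split.

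The steps are all essentially bookkeeping once the parametrization in \S2.4 is in hand; the only substantive input is the count of symplectic-form-preserving sublattices $L_2' \subset L_0$, which is the main obstacle and which I would simply import from \cite[Proposition 1.7]{Se2} rather than reprove. The remaining counts reduce to counting lines in a two- or four-dimensional $k$-vector space, and the symmetry between the two kinds of special neighbors of a non-special vertex follows from the fact that $\tau$ (or dualization) preserves the star of $[L_3]$ while swapping the two special types.
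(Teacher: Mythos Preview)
Your argument is correct and follows essentially the same route as the paper's: reduce to $[L_0]$, $[L_2]$, $[L_3]$ by transitivity, import the count of type $2$ sublattices of $L_0$ from \cite[Proposition~1.7]{Se2}, count type $3$ neighbors of $[L_0]$ as lines in $L_0/\pi L_0$, count type $2$ neighbors of $[L_3]$ as lines in $L_1/L_3$, and transfer counts between the two special types via $\tau$. Your explicit invocation of duality/$\tau$ for the half-half split at $[L_3]$ makes precise what the paper phrases simply as ``by symmetry''.
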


\subsection{The finite quotient $X_{\Gamma}$}
Let $\Gamma$ be a discrete  torsion-free co-compact-mod-center subgroup of $G$. The existence of $\Gamma$ follows from Borel and Harder \cite{BH}. Assume that $ \Gamma \subset G_0$. Then $\Gamma$ acts freely on $X$ and is type-preserving. Consider the finite quotient complex $\Gamma \bb X$,  denoted by $X_{\Gamma}$. It is a $(q+1)$-regular complex in the sense that each edge is contained in exactly $q+1$ chambers (cf.  \cite[Proposition 3.2]{Se1}).

It follows from the algebraic parametrization that the numbers of special vertices and type 2 edges in $X_{\Gamma}$ are equal to the cardinality of the double cosets in $\Gamma \bb G/KZ$ and $\Gamma \bb G/{P_2}Z$ respectively. The numbers of non-special vertices, type 1 edges, and chambers in $X_{\Gamma}$ are one-half the numbers of the double cosets in $\Gamma \bb G/P_{02}Z$, $\Gamma \bb G/{P_1}Z$, $\Gamma \bb G/IZ$ respectively.

\begin{prop}\label{vertex} Let $N_p$, $N_s$, $N_{ns}$ denote the numbers of primitive, type 2 special, and non-special vertices in $X_{\Gamma}$ respectively. Then the ratio $N_p: N_s: N_{ns}=1:1:(q^2+1)$.
\end{prop}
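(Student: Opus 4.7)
The plan is to prove the proposition by double-counting the type $2$ edges of $X_\Gamma$, using the local degree data supplied by Proposition~\ref{vertex0}. Since $\Gamma$ is torsion-free and type-preserving, it acts freely on $X$, so each vertex of $X_\Gamma$ has the same star as any of its lifts in $X$; the neighbor counts of Proposition~\ref{vertex0} therefore carry over verbatim to $X_\Gamma$.

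First I would split the type $2$ edges of $X_\Gamma$ into two subclasses according to whether the special endpoint is primitive or type $2$ special. By Proposition~\ref{vertex0}, every primitive special vertex (resp.\ every type $2$ special vertex) has $q^3+q^2+q+1$ non-special neighbors, and every non-special vertex has exactly $q+1$ primitive and $q+1$ type $2$ special neighbors, these being the two equal halves of the $2(q+1)$ special neighbors.

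Double-counting each subclass of type $2$ edges then yields
\begin{align*}
N_p\,(q^3+q^2+q+1) &= (q+1)\,N_{ns},\\
N_s\,(q^3+q^2+q+1) &= (q+1)\,N_{ns}.
\end{align*}
Using the factorization $q^3+q^2+q+1 = (q+1)(q^2+1)$, both identities collapse to $N_{ns} = (q^2+1)N_p = (q^2+1)N_s$, which forces $N_p = N_s$ and gives the stated ratio $N_p : N_s : N_{ns} = 1 : 1 : (q^2+1)$.

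The only point needing care is the descent of the local degree data from $X$ to $X_\Gamma$; once that has been made explicit, the rest is routine bookkeeping. In particular, I do not expect a genuine obstacle, since the proposition should emerge as an essentially immediate corollary of Proposition~\ref{vertex0}. One could equivalently verify the count $N_p = N_s$ by a parallel double-count of type $1$ edges between primitive and type $2$ special vertices, providing a redundancy check, but the type $2$ edge count already delivers the full result.
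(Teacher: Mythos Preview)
Your proof is correct and uses essentially the same approach as the paper: both descend the local degree data of Proposition~\ref{vertex0} to $X_\Gamma$ via the free action of $\Gamma$ and then double-count edges. The only cosmetic difference is that the paper obtains $N_p=N_s$ from the type~$1$ edge count and then $N_{ns}=(q^2+1)N_p$ from the aggregate type~$2$ edge count, whereas you split the type~$2$ edges by the type of their special endpoint and get both relations at once; the two arguments are interchangeable and equally elementary.
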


\begin{proof}
As $\Gamma$ acts freely on $X$, two distinct edges sharing a common vertex will not be identical modulo $\Gamma$. So according to Proposition \ref{vertex0},
there are $q^3+q^2+q+1$ type 1 edges and the same number of type 2 edges out of each special vertex in $X_{\Gamma}$, and there are $2(q+1)$ type 2 edges out of
each non-special vertex in $X_{\Gamma}$.

Recall that each type 1 edge connects a primitive vertex with a types 2 special vertex. By considering the number of type $1$ edges in $X_\Gamma$ we get $N_p (q^3+q^2+q+1) = N_s(q^3+q^2+q+1)$, which shows $N_p=N_s$. Similarly, from the number of type $2$ edges in $X_\Gamma$ we get $(N_p + N_s)(q^3+q^2+q+1) = 2(q+1)N_{ns}$, which yields $N_{ns} = (q^2+1)N_p$.
\end{proof}

\section{Adjacency operators on vertices, edges, and chambers}
In this section we introduce adjacency operators for special vertices, directed type 1 edges, type 2 edges, and directed chambers.  They will be defined on the building $X$ first, and then passed to the finite quotient $X_{\Gamma}$.

\subsection{Vertex adjacency operators}
Recall that the special vertices in $X$ are parametrized by $\{gKZ\}_{g \in G}$. On the space $C(G/KZ)$ of complex-valued functions on $G/KZ$,
define $A_1$ to be the operator sending $f \in C(G/KZ)$ to
$$A_1 f(gKZ)=\sum_{g_iK\in K\diag(1,1,\pi,\pi)K/K} f(gg_i KZ);$$
and $A_2$ the operator sending $f$ to
$$A_2 f(gKZ)=(q^2+1)f(gKZ)+ \sum_{h_jK\in K\diag(1, \pi, \pi, \pi^2)K/K} f(gh_jKZ).$$
These are the Hecke operators associated to the double $K$-cosets represented by $\diag(1,1,\pi,\pi)$ and $\diag(1,\pi,\pi,\pi^2)$ respectively.

The coset decompositions that appear in the definition above can be explicitly written out as follows:
\begin{multline}\label{A_1}
K\diag(1,1,\pi,\pi)K=\bigsqcup_{i \in \I} g_iK=\bigsqcup_{a, b, c\in \oo/\pp}\left(\begin{matrix} \pi&0&b&a\\ 0&\pi&c&b\\ 0&0&1&0\\ 0&0&0&1 \end{matrix}\right)K   \\
\bigsqcup_{\alpha, \beta \in \oo/\pp} \left(\begin{matrix}\pi&-\alpha&0&\beta\\0&1&0&0\\0&0&\pi&\alpha\\0&0&0&1\end{matrix}\right)K
\bigsqcup_{\gamma \in \oo/\pp}\left(\begin{matrix}1&0&0&0\\0&\pi&\gamma&0\\0&0&1&0\\0&0&0&\pi\end{matrix}\right)K
\bigsqcup \left(\begin{matrix}1&0&0&0\\0&1&0&0\\0&0&\pi&0\\0&0&0&\pi\end{matrix}\right)K;
\end{multline}

\begin{multline}\label{A_2}
K\diag(1, \pi, \pi, \pi^2)K=\bigsqcup_{j \in \J} h_jK=\bigsqcup_{a,b \in \oo/\pp, c\in \oo/{\pp}^2} \left(\begin{matrix}\pi^2&-\pi a&\pi b&c\\0&\pi&0&b\\0&0&\pi&a\\0&0&0&1\end{matrix}\right)K  \\
\bigsqcup_{u \in \oo/\pp, v\in \oo/{\pp}^2} \left(\begin{matrix}\pi&0&u&0\\0&\pi^2&v&\pi u\\0&0&1&0 \\0&0&0&\pi\end{matrix}\right)K
\bigsqcup_{w\in \oo/\pp}\left(\begin{matrix}\pi&-w&0&0\\0&1&0&0\\0&0&\pi^2&\pi w\\0&0&0&\pi\end{matrix}\right)K \\
\bigsqcup_{\alpha \in (\oo/\pp)^\times }\left(\begin{matrix}\pi&0&0&\alpha\\0&\pi&0&0\\0&0&\pi&0\\0&0&0&\pi\end{matrix}\right)K
\bigsqcup_{\beta \in \oo/\pp, \gamma \in (\oo/\pp)^\times} \left(\begin{matrix}\pi&0&\beta&\gamma\\0&\pi&\frac{\beta^2}{\gamma}&\beta\\0&0&\pi&0\\0&0&0&\pi\end{matrix}
\right)K
\bigsqcup \left(\begin{matrix}1&0&0&0\\0&\pi&0&0\\0&0&\pi&0\\0&0&0&\pi^2\end{matrix}\right)K.
\end{multline}

Identify each vertex with its parametrization. Then $A_1$, $A_2$ can be regarded as operators on special vertices. To see how they act, it suffices to
check on the special vertex $[L_0]=KZ$. For $A_1$, there are $q^3+q^2+q+1$ left $K$-cosets in the decomposition (\ref{A_1}). So each special vertex $gKZ$
gives rise to $q^3+q^2+q+1$ special vertices $\{gg_iKZ\}_{i \in \I}$, which we refer to as the neighboring vertices of $gKZ$ with respect to $A_1$.
The neighboring vertices of $[L_0]$ with respect to $A_1$ that lie in the fundamental apartment $\Sigma_0$ are obtained from the $K$-cosets  $\diag(\pi,\pi,1,1)K$, $\diag(\pi,1,\pi,1)K$, $\diag(1,\pi,1,\pi)K$, and $\diag(1,1,\pi,\pi)K$ in (\ref{A_1}), labeled as vertices $[1,1,0,0], [1,0,1,0], [0,1,0,1]$, and $[0,0,1,1]$ in $\Sigma_0$ (cf. Figure 1).
These are all of the special vertices in $\Sigma_0$ adjacent to $[L_0]$.
If  $\Sigma$ is a different apartment  containing $[L_0]$, then $\Sigma=g'\Sigma_0$ for some $g'\in K$. So the left translation by $g'$ maps
 the type $2$ neighbors of $[L_0]$ in $\Sigma_0$ to those in $\Sigma$. This shows that the special vertices $\{g_iKZ\}_{i \in \I}$ are distinct and all adjacent to $[L_0]$. As there are
exactly $q^3+q^2+q+1$ special vertices adjacent to a given one, $A_1$ can be interpreted as the vertex adjacency operator on special vertices of $X$.

Similar argument applies to the operator $A_2$. The coset decomposition in (\ref{A_2}) shows that the neighboring vertices
 of $[L_0]$ with respect to $A_2$ that lie in the apartment $\Sigma_0$ are parametrized by $\diag(\pi^2, \pi, \pi, 1)KZ$, $\diag(\pi, \pi^2, 1, \pi)KZ$, $\diag(\pi, 1, \pi^2, \pi)KZ$, and $\diag(1, \pi, \pi, \pi^2)KZ$, which are the vertices $[1,0,0,-1], [0,1,-1,0],[0,-1,1,0]$, and $[-1,0,0,1]$ in Figure 1. These are the special vertices in $\Sigma_0$ that have edge distance two from $[L_0]$. So $A_2$ can be interpreted as the edge distance $2$ adjacency operator  plus $(q^2+1)$ times the identity operator on special vertices of $X$.

\subsection{Edge adjacency operators}
The directed type 1 edges in $X$ are parametrized by $G/{P_1}Z$. We define $L_{P_1}$ to be the operator on the space $C(G/{P_1}Z)$, sending $f$ in $C(G/{P_1}Z)$ to
$$L_{P_1}f(gP_1Z)=\sum_{g_i P_1 \in P_1 \diag(1,1,\pi,\pi)P_1/{P_1}} f(gg_i P_1Z).$$ The relevant coset decomposition is
\begin{equation}\label{L_{P_1}}
P_1\diag(1,1,\pi,\pi)P_1 = \bigsqcup_{i \in \I'} g_i P_1 =\bigsqcup_{a,b,c \in \oo/\pp}\left(\begin{matrix}1&0&0&0\\0&1&0&0\\\pi b &\pi a&\pi&0\\\pi c&\pi b&0&\pi\end{matrix}\right)P_1.
\end{equation}

To facilitate the description of $L_{P_1}$, following \cite{Se1}, we call two vertices $v_1$, $v_2$ in $X$ {\it close}
if there are two edge sharing chambers $C_1$ and $C_2$ of $X$ such that $v_i \in C_i$ for $i = 1, 2$ and $v_i \notin C_j$ for $i \ne j$.
Recall the directed type $1$ edge $E_1$ from $KZ$ to $ \tau KZ$ (namely the edge $[0,0,0,0] \to [0,0,1,1]$ in Figure 1) in the fundamental chamber; it is parametrized by $P_1Z$. It has $q^3$ neighboring edges with respect to $L_{P_1}$, $\{g_i P_1Z\}_{i \in \I'}$, given by (\ref{L_{P_1}}).
Of these,  $\diag(1, 1, \pi, \pi)P_1Z$ is the only one lying in the fundamental apartment $\Sigma_0$; it is the type 1 directed edge
$\sigma KZ \to \diag(1,1,\pi^2,\pi^2)KZ$, or $[0,0,1,1] \to [0,0,2,2]$ in Figure 1. So the directed edge $P_1Z$ and its neighboring edge $\diag(1,1,\pi, \pi)P_1Z$ are adjacent segments of the same directed straight line in $\Sigma_0$.
Other neighboring edges of $P_1Z$ with respect to $L_{P_1}$ are similarly seen from the apartments containing $P_1Z$.
In general, the $q^3$ neighboring edges with respect to $L_{P_1}$ of the given directed type $1$ edge $gKZ \to g'KZ$ can be described as $g'KZ \to g''KZ$, where $g''KZ$ is a special vertex adjacent to $g'KZ$, but not close to $gKZ$.  Under this interpretation we regard $L_{P_1}$ as the adjacency operator on directed type $1$ edges.

The type 2 edges in $X$ are parametrized by $G/{P_2}Z$, directed from special vertices to non-special vertices. Define $L_{P_2}$ to be the operator on the space $C(G/{P_2}Z)$, sending $f \in C(G/{P_2}Z)$ to
$$L_{P_2}f(gP_2Z)=\sum_{h_j P_2 \in P_2 \diag(1, \pi, \pi, \pi^2)P_2/{P_2}} f(gh_j P_2Z).$$
The relevant coset decomposition here is
\begin{equation}\label{L_{P_2}}
P_2 \diag(1, \pi, \pi, \pi^2) P_2=\bigsqcup_{j \in \J'} h_j P_2 =\bigsqcup_{a,b \in \oo/\pp, c\in \oo/{{\pp}^2}} \left(\begin{matrix}1&0&0&0\\-\pi a&\pi&0&0\\
\pi b&0&\pi&0 \\ \pi c&\pi^2 b &\pi^2 a&\pi^2 \end{matrix}\right)P_2.
\end{equation}

The neighboring edges with respect to $L_{P_2}$ can be described as follows. Similar to $L_{P_1}$, one observes that the type 2 directed edge $P_2Z$ (which is the edge $E_2$ of the fundamental chamber and given by $[0,0,0,0] \to [0,1,1,1]$ in Figure 1) has only one neighbor $\diag(1, \pi, \pi, \pi^2)P_2Z$ in $\Sigma_0$, given by $[-1,0,0,1] \to [-1,1,1,2]$ in Figure 1. So in a fixed apartment, a
directed type 2 edge and its neighbor are part of the same directed straight line. In the building $X$, a type $2$ directed edge $gKZ \to g_1 \bar{P}_{02}Z$ has $q^4$ neighboring edges with respect to $L_{P_2}$, and they are characterized as $g'KZ \to g_1' \bar{P}_{02}Z$, where the special vertex $g'KZ$ is adjacent to $g_1 \bar{P}_{02}Z$ and close to $gKZ$, while the non-special vertex $g_1'\bar{P}_{02}Z$ is adjacent to $g'KZ$ but not close to $g_1 \bar{P}_{02}Z$. In this sense $L_{P_2}$ is the adjacency operator on type $2$ edges.

\subsection{Chamber adjacency operator}
The directed chambers in $X$ are parametrized by $G/IZ$. Let $t=\left(\begin{matrix}1&0&0&0\\0&0&1&0\\0&-\pi&0&0\\0&0&0&\pi\end{matrix}\right)$.
We define $L_I$ to be the Iwahori-Hecke operator on the space $C(G/IZ)$, given by
$$ L_I f(gIZ)=\sum_{g_jI \in I t I/I} f(gg_j IZ).$$

The coset decomposition in this case is
\begin{equation}\label{L_I}
I t I =\bigsqcup_{j \in \J''} g_j I =\bigsqcup_{a,b \in \oo/\pp} \left(\begin{matrix}1&0&0&0\\0&0&1&0\\\pi b&-\pi &0&0\\\pi a&0&\pi b&\pi \end{matrix}\right)I.
\end{equation}

The operator $L_I$ can be interpreted as a chamber adjacency operator on directed chambers, with the adjacency condition described below. The directed chamber parametrized by $IZ$ is the fundamental chamber $C$ with the directed edge $[0,0,0,0]\to [0,0,1,1]$. It has $q^2$ neighboring directed chambers with respect to $L_I$, namely $\{g_j IZ\}_{j \in \J''}$.
 The only neighbor of $IZ$ contained in $\Sigma_0$ (cf. Figure 1) is the chamber $t IZ$, formed by vertices $[0,0,1,1]$, $[0,0,1,2]$, and $[-1,0,0,1]$ with the directed edge $[0,0,1,1] \to [-1,0,0,1]$. In general, given  a directed chamber $gIZ$ with directed edge $gKZ \to g'KZ$, its neighboring directed chambers defined through $L_I$ can be  obtained as follows. First take a chamber $g_1 I Z\neq gIZ$, sharing the same type 2 edge containing $g'KZ$ with $gIZ$. Then any chamber $g_2 IZ \neq g_1 IZ$ that contains the type 1 edge of $g_1 IZ$ with the directed edge from $g'KZ$ to the other special vertex in $g_2 IZ$ is a neighboring chamber of $gIZ$. As each edge is contained in exactly $q+1$ chambers, it is easy to check that exactly $q^2$ directed chambers can be obtained
 this way.

\subsection{Adjacency operators defined on $X_{\Gamma}$}

The vertex adjacency operators $A_1$, $A_2$ defined on the space $C(G/KZ)$ naturally induce operators on the
finite-dimensional space $C(\Gamma \bb G/KZ)$, also denoted by $A_1$, $A_2$ when no ambiguity arises.
So for $f \in C(\Gamma \bb G/KZ)$, we have
$$A_1 f(\Gamma gKZ)=\sum_{g_iK\in K\diag(1,1,\pi,\pi)K/K} f(\Gamma gg_i KZ) $$ and
$$A_2 f(\Gamma gKZ)=(q^2+1)f(\Gamma gKZ)+ \sum_{h_jK\in K\diag(1, \pi, \pi, \pi^2)K/K} f(\Gamma gh_jKZ).$$

Similarly, we have the edge adjacency operators $L_{P_1}$ and $L_{P_2}$ defined on $C(\Gamma \bb G/{P_1}Z)$
and $C(\Gamma \bb G/{P_2}Z)$ respectively, and
the chamber adjacency operator $L_I$ defined on $C(\Gamma \bb G/IZ)$.
These adjacency operators associated to $X_{\Gamma}$ play major roles in the closed form descriptions of various zeta functions attached to $X_{\Gamma}$, to be discussed in the next section.

\section{Zeta functions of complexes from $\Sp_4(F)$}

Recall that a cycle in a topological space has a starting point and orientation, and repetition of vertices is allowed. It is primitive if it can not be obtained by repeating another cycle more than once.
Two cycles are said to be equivalent if one can be obtained from the other by shifting its starting vertex.

The building $X$ is naturally equipped with a metric coming from the Euclidean metric on each apartment. Geodesics in $X$ are the straight lines in apartments
(cf. Brown \cite{Br}).
We say that a cycle in $X_{\Gamma}$ is geodesic if it can be lifted to a geodesic path in $X$. A geodesic cycle in $X_{\Gamma}$ is called tailless if it
remains geodesic after shifting the starting vertex. All paths considered in this paper are contained in the $1$-skeleton of $X$ or $X_\Gamma$.

\subsection{The zeta function of $X_{\Gamma}$}

The zeta function of $X_{\Gamma}$ is defined as $$Z(X_{\Gamma}, u)=\prod_{[C]} (1-u^{l[C]})^{-1},$$ where $[C]$ runs through the equivalence classes of tailless primitive closed geodesics in $X_{\Gamma}$, and $l([C])$ is the length of any geodesic in $[C]$. Since the geodesics considered above are in the $1$-skeleton of $X_{\Gamma}$, each of them comprises edges of the same type.

Like the zeta functions attached to finite graphs \cite{Ha} and finite complexes arising from the building of $\SL_3(F)$
 \cite{KL}, the zeta function $Z(X_{\Gamma}, u)$ can be expressed in terms of the edge adjacency operators $L_{P_1}$ on $C(\Gamma \bb G/{P_1}Z)$ and $L_{P_2}$ on $C(\Gamma\bb G/{P_2}Z)$:

\begin{theorem}\label{zeta1}
The zeta function $Z(X_{\Gamma}, u)$ is a rational function, given in closed form by
$$ Z(X_{\Gamma}, u)=\frac{1}{\det(I-L_{P_1} u)\det(I-L_{P_2}u^2)}. $$
\end{theorem}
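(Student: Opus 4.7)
The plan is to take logarithms on both sides of the proposed identity and match coefficients of $u^n$. The key structural observation, already recorded in the excerpt, is that a geodesic cycle in the $1$-skeleton of $X_\Gamma$ lifts to a straight line in an apartment of $X$, and since straight lines in an apartment cannot switch between type $1$ and type $2$ edges, every such cycle is composed of edges of a single type. Writing $N_n = N_n^{(1)} + N_n^{(2)}$, where $N_n^{(i)}$ counts length-$n$ closed tailless geodesic cycles of type $i$ in $X_\Gamma$ with a marked starting vertex, and using the definition
\begin{equation*}
\log Z(X_\Gamma, u) \;=\; \sum_{n \ge 1} \frac{N_n}{n}\, u^n
\end{equation*}
together with $\log \det(I - M)^{-1} = \sum_{n \ge 1} \operatorname{tr}(M^n)/n$, the theorem reduces to two trace identities: $\operatorname{tr}(L_{P_1}^n) = N_n^{(1)}$ and $\operatorname{tr}(L_{P_2}^k) = \tfrac{1}{2} N_{2k}^{(2)}$ (with $N_n^{(2)} = 0$ for odd $n$, since type $2$ cycles must alternate between special and non-special vertices).

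The type $1$ identity follows from the combinatorial description of $L_{P_1}$: a directed type $1$ edge $E$ has $q^3$ neighbors under $L_{P_1}$, namely the type $1$ directed edges that extend $E$ geodesically by one step, the ``not close'' condition excluding precisely the unique backward/sideways continuation. Hence $(L_{P_1}^n)_{E,E'}$ counts length-$n$ geodesic walks of type $1$ directed edges from $E$ to $E'$, and $\operatorname{tr}(L_{P_1}^n)$ counts closed such walks. These biject with closed tailless type $1$ geodesic cycles of length $n$ in $X_\Gamma$ carrying a distinguished starting directed edge, which is the same as a distinguished starting vertex; this yields $N_n^{(1)} = \operatorname{tr}(L_{P_1}^n)$.

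The type $2$ identity carries a factor of two and the exponent jump from $u$ to $u^2$ because the parametrization $\Gamma \backslash G/P_2 Z$ records each type $2$ edge only in the orientation \emph{from special to non-special}, and a single application of $L_{P_2}$ advances along two consecutive edges of a type $2$ geodesic: from $s \to n$ to $s' \to n'$, where $s, n, s', n'$ are four consecutive vertices of a straight line in some apartment, as enforced by the close/not-close conditions built into $L_{P_2}$-adjacency. Consequently closed $L_{P_2}$-walks of length $k$ biject with closed tailless type $2$ geodesic cycles of length $2k$ equipped with a marked starting \emph{special} vertex. Since such a cycle has exactly $k$ special vertices among its $2k$ possible starting positions, we obtain $\operatorname{tr}(L_{P_2}^k) = \tfrac{1}{2} N_{2k}^{(2)}$, and summing the two logarithmic contributions gives the factorization claimed.

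The main obstacle is the careful verification that $L_{P_i}$-adjacency on directed edges corresponds bijectively to geodesic one-step (resp.\ two-step) continuation in the $1$-skeleton of $X$. This requires working through the explicit coset decompositions of $P_1\diag(1,1,\pi,\pi)P_1$ and $P_2\diag(1,\pi,\pi,\pi^2)P_2$ recalled in the preceding section, checking inside a single apartment that the enumerated neighbors are exactly the straight-line extensions of the base edge, and then globalizing via the fact that any two consecutive edges of $X$ lie in a common apartment. Taillessness of a closed $L_{P_i}$-walk is automatic because closing up forces the wrap-around step to be a geodesic extension as well, while freeness of the $\Gamma$-action on $X$ ensures the bijection descends to $X_\Gamma$ without collapse.
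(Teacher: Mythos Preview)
Your proposal is correct and follows essentially the same route as the paper: split $Z(X_\Gamma,u)=Z_1(X_\Gamma,u)Z_2(X_\Gamma,u)$ by edge type, identify $\Tr L_{P_1}^n$ with the count of length-$n$ tailless type~$1$ geodesic cycles and $\Tr L_{P_2}^k$ with half the count of length-$2k$ tailless type~$2$ geodesic cycles, and exponentiate. Your explanation of the factor $\tfrac12$ (only the special vertices along a type~$2$ cycle serve as starting points of an $L_{P_2}$-walk) and of the passage $u\mapsto u^2$ makes explicit exactly what the paper sketches in one sentence.
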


\begin{proof}
For $i = 1, 2$ let $Z_i(X_{\Gamma}, u)=\prod_{[C]} (1-u^{l[C]})^{-1}$, where $[C]$ runs through the equivalence classes of tailless
primitive closed geodesics consisting of only type $i$ edges. An argument similar to the proof of Theorem 2 \cite{ST} gives $u\frac{d}{du}\log Z_i(X_{\Gamma}, u) = \sum_C u^{l(C)}$, where $C$ runs through all tailless closed geodesics consisting of only type $i$ edges. In general, a cycle in $X_{\Gamma}$ given by a sequence of adjacent vertices $ v_0 \to v_1 \to v_2 \dots \to v_n=v_0$ can also be described
in terms of a sequence of directed edges $e_1 \to e_2 \dots \to e_n \to e_1$, where $e_i$ is the edge $v_{i-1} \to v_i$. If $A_e$ denotes the edge adjacency matrix on directed edges in the $1$-skeleton of $X_{\Gamma}$, then $\Tr A_e^n$ counts the number of tailless cycles of length n in $X_{\Gamma}$, as pointed out in \cite{Ha}.

First we claim that
\begin{equation}\label{e1}
Z_1(X_{\Gamma}, u)=\frac{1}{\det(I-L_{P_1} u)}.
\end{equation}
In view of the definition of $L_{P_1}$, $\Tr L_{P_1}^n$ counts precisely the number of type $1$ tailless closed
geodesics of length $n$ in $X_{\Gamma}$. Hence we have
\begin{align*}
u\frac{d}{du}\log Z_1(X_{\Gamma}, u)=\sum_{n \ge 1} \Tr L_{P_1}^n u^n
= u\frac{d}{du}\log \frac{1}{\det(I-L_{P_1} u)}.
\end{align*}
The standard procedure of exponentiating both sides and comparing the constants verifies the claim.

Next consider  $Z_2(X_{\Gamma},u)$. Along a tailless geodesic closed cycle expressed by a sequence of directed type $2$ edges, only every other edges are from special vertices to non-special ones, and in the subsequence of such edges, the consecutive ones are adjacent precisely as described by $L_{P_2}$.
Therefore $\Tr L_{P_2}^n$ counts one half the number of type 2 tailless closed geodesics of length $2n$ in $X_{\Gamma}$, and we have
$$ u\frac{d}{du}\log Z_2(X_{\Gamma}, u)=2 \sum_{n \ge 1} \Tr L_{P_2}^n u^{2n}
                                       =u\frac{d}{du}\log \frac{1}{\det(I-L_{P_2} u^2)},$$
which yields the identity
\begin{equation}\label{e2}
Z_2(X_{\Gamma}, u)=\frac{1}{\det(I-L_{P_2} u^2)}.
\end{equation}

As $Z(X_{\Gamma}, u)=Z_1(X_{\Gamma}, u)Z_2(X_{\Gamma}, u)$, the theorem follows from (\ref{e1}) and (\ref{e2}).

\end{proof}

\subsection{Another closed form expression of the zeta function $Z(X_{\Gamma}, u)$}

Similar to zeta functions of graphs \cite{Ih} and zeta functions of complexes arising from finite quotients of the building attached to $\SL_3(F)$ \cite{KL, KLW}, $Z(X_{\Gamma}, u)$ has another expression which gives topological information and spectral information of the complex $X_\Gamma$.

\begin{theorem}\label{zeta2}
$Z(X_{\Gamma}, u)$ has another closed form expression
in terms of the vertex adjacency operators $A_1$ and $A_2$ on $C(\Gamma \bb G/KZ)$ and chamber adjacency operator $L_I$ on $C(\Gamma \bb G/IZ)$:
$$Z(X_{\Gamma}, u)=\frac{(1-u^2)^{\chi(X_{\Gamma})}(1-q^2 u^2)^{-(q^2-1)N_p}}{\det(I-A_1 u+qA_2 u^2-q^3 A_1 u^3+q^6I u^4)\det(I-L_I u)},$$
where $\chi(X_{\Gamma})$ is the Euler characteristics of $X_{\Gamma}$, and $N_p$ is the number of primitive vertices in $X_{\Gamma}$.
\end{theorem}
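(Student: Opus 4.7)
Given Theorem \ref{zeta1}, the assertion of Theorem \ref{zeta2} is equivalent to the polynomial identity
\begin{equation*}
\begin{split}
(1-u^{2})^{\chi(X_{\Gamma})}&(1-q^{2}u^{2})^{-(q^{2}-1)N_{p}}\det(I-L_{P_{1}}u)\det(I-L_{P_{2}}u^{2})\\
&=\det(I-A_{1}u+qA_{2}u^{2}-q^{3}A_{1}u^{3}+q^{6}Iu^{4})\det(I-L_{I}u).
\end{split}
\end{equation*}
The plan is to follow the representation-theoretic strategy of \cite{KLW}. Decompose $L^{2}(\Gamma\bb G)$ as a right $G$-module into admissible irreducibles $\bigoplus_{\rho}m_{\rho}V_{\rho}$. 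Each of the five operators above is right convolution by a bi-$HZ$-invariant function for the appropriate $H\in\{K,P_{1},P_{2},I\}$, so each commutes with the right $G$-action and preserves the decomposition $C(\Gamma\bb G/HZ)=\bigoplus_{\rho}m_{\rho}V_{\rho}^{HZ}$. Both sides of the identity therefore factorize according to the isotypic decomposition, and the proof reduces to identifying the polynomial contributions of each $\rho$.

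Only those $\rho$ with nonzero Iwahori-fixed subspace contribute. These are classified by Bernstein component: the trivial and other one-dimensional representations; unramified principal series $\Ind_{B}^{G}\chi$ and their irreducible constituents; and degenerate principal series induced from the Siegel parabolic $P$ and the Klingen parabolic $Q$ by characters unramified on the Levi, together with their constituents. For a generic unramified principal series with Satake parameters $\alpha_{1},\ldots,\alpha_{4}$, the space $V_{\rho}^{KZ}$ is one-dimensional, and the vertex polynomial restricted to it equals the shifted reciprocal spin $L$-factor $\prod_{i=1}^{4}(1-\alpha_{i}q^{3/2}u)$. The Iwahori-fixed space $V_{\rho}^{IZ}$ is eight-dimensional and indexed by the Weyl group of $\GSp_{4}$; on a Weyl-group basis I would write down the matrices of $L_{P_{1}},L_{P_{2}},L_{I}$ from the coset decompositions of \S3, extract their characteristic polynomials, and verify the local identity on such generic $\rho$ (for which the Euler and degenerate exponents are both zero).

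The global exponents then arise from the remaining non-generic $\rho$. The contribution $(1-u^{2})^{\chi(X_{\Gamma})}$ is produced by the trivial representation together with certain other special constituents, and its exponent is pinned down by the simplicial Euler-characteristic formula $\chi(X_{\Gamma})=V-E+F$, where the counts $V$, $E$, $F$ of vertices, edges, and chambers follow from Propositions \ref{vertex0}--\ref{vertex} together with $(q+1)$-regularity. The factor $(1-q^{2}u^{2})^{-(q^{2}-1)N_{p}}$ arises analogously from the discrepancies between the two sides of the local identity on specific non-spherical constituents of degenerate principal series induced from the Siegel parabolic; Frobenius reciprocity together with dimension counts of the corresponding Iwahori-fixed vector spaces will identify the aggregate multiplicity as $(q^{2}-1)N_{p}$.

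The main obstacle is the systematic case analysis for the reducible principal series and their non-spherical constituents. Unlike $\PGL_{3}(F)$, the group $\GSp_{4}(F)$ has two non-conjugate maximal parabolics ($P$ and $Q$) and an order-$8$ Weyl group, producing several families of Iwahori-spherical irreducibles that must each be handled individually; the Sally--Tadi\'c classification of irreducible representations of $\GSp_{4}(F)$ together with Schmidt's analysis of their Iwahori-fixed vectors supplies the required module structure, reducing each case to a finite computation of Hecke operators on a small-dimensional space. Once every local identity is verified, taking the product over all $\rho$ yields the global polynomial identity and hence Theorem \ref{zeta2}.
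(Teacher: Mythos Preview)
Your proposal is correct and follows essentially the same route as the paper: reduce to the identity relating the four determinants via Theorem~\ref{zeta1}, decompose $L^{2}(\Gamma\bb G/Z)$ into irreducible Iwahori-spherical constituents using the Sally--Tadi\'c/Schmidt classification, compute the operator eigenvalues on each constituent, and recover the global exponents from multiplicity counts tied to the Euler characteristic and to $N_{p}$. One minor refinement: the factor $(1-q^{2}u^{2})^{-(q^{2}-1)N_{p}}$ does not come solely from Siegel-induced constituents but from an alternating sum of multiplicities across several types (IIa, IIb, Vb/c, Vd, VIc, VId and IVd in the paper's labeling), and pinning down this sum requires a separate dimension-counting argument (the paper's Proposition~\ref{multi}(2)) rather than Frobenius reciprocity alone.
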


Combining Theorems \ref{zeta1} and \ref{zeta2}, we obtain the following identity on the operators on $X_\Gamma$ defined before.

\begin{cor}\label{zetaid} With the same notation as in Theorems  \ref{zeta1} and \ref{zeta2}, we have
\begin{equation}\label{id}
\frac{(1-u^2)^{\chi(X_{\Gamma})}(1-q^2 u^2)^{-(q^2-1)N_p}}{\det(I-A_1 u+qA_2 u^2-q^3 A_1 u^3+q^6I u^4)}
= \frac{\det(I-L_I u)}{\det(I-L_{P_1} u)\det(I-L_{P_2}u^2)}.
\end{equation}
\end{cor}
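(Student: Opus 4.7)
The plan is to deduce the corollary as a purely formal manipulation of the two closed-form expressions for $Z(X_\Gamma, u)$ provided by Theorems \ref{zeta1} and \ref{zeta2}. There is nothing new to compute: both theorems give rational-function formulas for the single object $Z(X_\Gamma, u)$, so equating them and clearing denominators yields exactly the identity (\ref{id}).

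Concretely, first I would write down the two expressions side by side: Theorem \ref{zeta1} gives
$$Z(X_\Gamma, u) \;=\; \frac{1}{\det(I-L_{P_1} u)\det(I-L_{P_2}u^2)},$$
and Theorem \ref{zeta2} gives
$$Z(X_\Gamma, u) \;=\; \frac{(1-u^2)^{\chi(X_{\Gamma})}(1-q^2 u^2)^{-(q^2-1)N_p}}{\det(I-A_1 u+qA_2 u^2-q^3 A_1 u^3+q^6I u^4)\,\det(I-L_I u)}.$$
Setting the two right-hand sides equal and multiplying through by $\det(I-L_I u)$ and $\det(I-L_{P_1}u)\det(I-L_{P_2}u^2)$ produces precisely (\ref{id}). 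The equality of rational functions is legitimate because both expressions represent the same formal power series in $u$ (namely, the Euler product defining $Z(X_\Gamma, u)$, which converges in a neighborhood of $u=0$ since $X_\Gamma$ is finite, so the number of tailless geodesic cycles of length $n$ grows at most exponentially); agreement as power series then forces agreement as rational functions.

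There is essentially no obstacle at this stage: the substance is entirely absorbed into Theorem \ref{zeta2}, whose proof will require the representation-theoretic analysis alluded to in the introduction (relating the vertex operators $A_1, A_2$ to spin $L$-factors, keeping track of the correction factors $(1-u^2)^{\chi(X_\Gamma)}$ and $(1-q^2u^2)^{-(q^2-1)N_p}$, and handling the Iwahori-Hecke operator $L_I$). The corollary itself is just the bookkeeping observation that the product of numerators on one side matches the product of denominators on the other, after the two zeta formulas are combined.
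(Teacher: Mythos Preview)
Your derivation is formally valid \emph{if} Theorem~\ref{zeta2} is established independently of the corollary, but in the paper the logical dependence runs the other way. Immediately after stating Corollary~\ref{zetaid}, the paper remarks that to prove Theorem~\ref{zeta2} it suffices to establish the identity~(\ref{id}); the actual work in \S5--\S6 is a direct representation-theoretic proof of~(\ref{id}), decomposing $L^2(\Gamma\backslash G/Z)$ into irreducible Iwahori-spherical constituents and computing the eigenvalues of $A_1,A_2,L_{P_1},L_{P_2},L_I$ on each. The section concludes with ``which completes the proof of Corollary~4.3 and hence Theorem~4.2 follows.'' So in the paper Corollary~\ref{zetaid} is the engine and Theorem~\ref{zeta2} the consequence, not the reverse.

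Thus your proposal, as written, is circular relative to the paper's architecture: you invoke Theorem~\ref{zeta2}, but the paper has no independent proof of it---its proof \emph{is} identity~(\ref{id}) combined with Theorem~\ref{zeta1}. You correctly identify that the representation-theoretic analysis is where the substance lies; the point is just that this analysis is carried out to prove the identity directly, not to prove Theorem~\ref{zeta2} first. If you want to match the paper, you should treat~(\ref{id}) as the target and sketch the eigenvalue comparison type by type (Table~3), together with the multiplicity count of Proposition~\ref{multi} that produces the exponents $\chi(X_\Gamma)$ and $-(q^2-1)N_p$.
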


To prove Theorem \ref{zeta2}, it suffices to establish the identity (\ref{id}). We shall follow the approach in \cite{KLW} by comparing the eigenvalues
of each operator, outlined below.

The operators $A_1$ and $A_2$ act on $C(\Gamma \bb G/KZ)$. Since the space $C(\Gamma \bb G/KZ)$ is finite dimensional, it can be regarded as the $K$-invariant subspace
$L^2(\Gamma \bb G/Z)^K$ in $L^2(\Gamma \bb G/Z)$. Similarly, we have $C(\Gamma \bb G/P_iZ)=L^2(\Gamma \bb G/Z)^{P_i}$ for $i=1,2$, and $C(\Gamma \bb G/IZ)=L^2(\Gamma \bb G/Z)^I$. The group $G$ acts on $L^2(\Gamma \bb G/Z)$ by right translations. As $\Gamma$ is discrete and cocompact-mod-center, the representation space $L^2(\Gamma \bb G/Z)$ is
decomposed into a direct sum of irreducible unitary representations of $G$ with trivial central character. To understand the actions of $A_1$, $A_2$, $L_{P_1}$, $L_{P_2}$, and $L_I$ for the sake of proving
the identity (\ref{id}), it then suffices to compute the actions of these operators on each irreducible representation $(\rho, V)$ with non-trivial $I$-invariant vectors, namely the Iwahori-spherical representation, that occurs in the decomposition of $L^2(\Gamma \bb G/Z)$. The unitary representations of $G$ are classified by Sally-Tadi\'c \cite{SaT}; while the irreducible Iwahori-spherical representations are determined by Borel \cite{Bo} and Casselman \cite{Ca} as constituents of unramified principal series representations of $G$. Therefore, we can realize each $(\rho, V)$ in the standard induced model to carry out computations.
Explicit computations will be dealt with in \S5, and a complete proof of the identity (\ref{id}) will be given in \S6.

\subsection{Representation-theoretic background}
The following tables are adapted from Schmidt \cite{Sc1}. Table 1 gives a list of all irreducible unitary Iwahori-spherical representations of $G$, which are divided into six types. Table 2 records the dimensions of the spaces of parahori-invariant vectors for each type of representations classified in Table 1. Here the parahoric subgroups $K$, $P_{02}$, $P_2$, $P_1$, and $I$ are defined as in $\S$ 2.3. For each type of representation $(\rho, V)$, the two columns $C_1$ and $C_2$ record the numbers $(2 \dim V^K + \dim V^{P_{02}}) -(\dim V^{P_1} + 2 \dim V^{P_2}) + \dim V^I$ and $4 \dim V^K -(\dim V^{P_1} + 2 \dim V^{P_2}) + \dim V^I$, respectively; these data will be needed in the proof of Proposition \ref{multi}. The last column of Table 2 indicates tempered representations, which are related to Ramanujan complexes discussed in \S 7.

The characters $\chi_1$, $\chi_2$, $\chi$, $\sigma$ of $F^\times$ in the tables are all assumed unramified. The notation $\nu$ denotes the absolute value of $F$, and the notation $\xi$ denotes the nontrivial unramified quadratic character of $F^\times$. For a character $\chi$ of $F^\times$, $e(\chi)$ is defined by $|\chi(x)|=|x|^{e(\chi)}$.

A type I representation is obtained by normalized parabolic induction from a character of the Borel subgroup $B$.
It is denoted by $\chi_1 \times \chi_2 \rtimes \sigma$ if the corresponding character of $B$ is given by
$$\left(\begin{matrix}a&*&*&*\\0&b&*&*\\0&0&cb^{-1}&*\\0&0&0&ca^{-1}\end{matrix}\right) \to \chi_1(a)\chi_2(b)\sigma (c).$$ Such a representation is irreducible if and only if $\chi_1 \ne \nu^{\pm 1}, \chi_2 \ne \nu^{\pm 1}$ and $\chi_1 \ne \nu^{\pm 1}\chi_2^{\pm 1}$.
Representations of types II and III are the constituents of $\nu^{1/2}\chi \times \nu^{-1/2}\chi \rtimes \sigma$ with $\chi \notin \{\nu^{\pm 1}, \nu^{\pm 3}\}$, and the constituents of $\chi \times \nu \rtimes \nu^{-1/2}\sigma$ with $\chi \notin \{\triv_{F^\times} , \nu^{\pm 2}\}$,
respectively. The type IV representations are further divided into four sub-types, corresponding to the four constituents of $\nu^2 \times \nu \rtimes \nu^{-3/2}\sigma$. We only include types IVa and IVd in the tables since the other two sub-types are never unitary. Representations of types V and VI are the constituents of $\nu \xi \times \xi \rtimes \nu^{-1/2}\sigma$ and $\nu \times \triv_{F^\times} \rtimes \nu^{-1/2}\sigma$, respectively. All the representations except for those of type IVd are infinite-dimensional. The readers are referred to Schmidt \cite[\S 1.1--\S 1.3]{Sc1} for further discussion of each type of representations.

\medskip
\noindent

\begin{tabular}{|c|c|c|} \hline
  \text{type}&\text{representation}&\text{conditions for unitarity}\\\hline

  &&$e(\chi_1)=e(\chi_2)=e(\sigma)=0$\\\cline{3-3}
  &&$\chi_1=\nu^\beta\chi,\:\chi_2=\nu^\beta\chi^{-1},\:e(\sigma)=-\beta$,\\
  &&$e(\chi)=0,\:\chi^2\neq1,\:0<\beta<1/2$\\\cline{3-3}
  \raisebox{0.5ex}[-0.5ex]{\rm I}&\raisebox{0.5ex}[-0.5ex]{$\chi_1\times\chi_2\rtimes\sigma
   \quad\text{(irreducible)}$}
   &$\chi_1=\nu^\beta,\:e(\chi_2)=0,\:e(\sigma)=-\beta/2$,\\
   &&$\chi_2\neq1,\:0<\beta<1$\\\cline{3-3}
  &&$\chi_1=\nu^{\beta_1}\chi,\:\chi_2=\nu^{\beta_2}\chi,\:
   e(\sigma)=(-\beta_1-\beta_2)/2$,\\
  &&$\chi^2=1,\:0\leq\beta_2\leq\beta_1,\:0<\beta_1<1,\:\beta_1+\beta_2<1$\\\hline

  &&$e(\sigma)=e(\chi)=0$\\\cline{3-3}
  \raisebox{2.5ex}[-2.5ex]{\rm IIa}&\raisebox{2.5ex}[-2.5ex]{$\chi\St_{\GL_2}\rtimes\sigma$}
   &$\chi=\xi\nu^\beta,\:e(\sigma)=-\beta,\:\xi^2=1,\:0<\beta<1/2$\\ \cline{2-3}
  &&$e(\sigma)=e(\chi)=0$\\\cline{3-3}
  \raisebox{2.5ex}[-2.5ex]{\rm IIb}&\raisebox{2.5ex}[-2.5ex]{$\chi\triv_{\GL_2}\rtimes\sigma$}
   &$\chi=\xi\nu^\beta,\:e(\sigma)=-\beta,\:\xi^2=1,\:0<\beta<1/2$\\\hline

  {\rm IIIa}&$\chi\rtimes\sigma\St_{\GSp_2}$&$e(\sigma)=e(\chi)=0$\\\cline{2-3}
  {\rm IIIb}&$\chi\rtimes\sigma\triv_{\GSp_2}$&$e(\sigma)=e(\chi)=0$\\\hline

  {\rm IVa}&$\sigma\St_{\GSp_4}$&$e(\sigma)=0$\\\cline{2-3}
  {\rm IVd}&$\sigma\triv_{\GSp_4}$&$e(\sigma)=0$\\\hline

  {\rm Va}&$\delta([\xi,\nu\xi],\nu^{-1/2}\sigma)$&$e(\sigma)=0$\\\cline{2-3}
  {\rm Vb}&$L(\nu^{1/2}\xi\St_{\GL_2},\nu^{-1/2}\sigma)$&$e(\sigma)=0$\\\cline{2-3}
  {\rm Vc}&$L(\nu^{1/2}\xi\St_{\GL_2},\xi\nu^{-1/2}\sigma)$&$e(\sigma)=0$\\\cline{2-3}
  {\rm Vd}&$L(\nu\xi,\xi\rtimes\nu^{-1/2}\sigma)$&$e(\sigma)=0$\\\hline

  {\rm VIa}&$\tau(S,\nu^{-1/2}\sigma)$&$e(\sigma)=0$\\\cline{2-3}
  {\rm VIb}&$\tau(T,\nu^{-1/2}\sigma)$&$e(\sigma)=0$\\\cline{2-3}
  {\rm VIc}&$L(\nu^{1/2}\St_{\GL_2},\nu^{-1/2}\sigma)$&$e(\sigma)=0$\\\cline{2-3}
  {\rm VId}&$L(\nu,1_{F^\times}\rtimes\nu^{-1/2}\sigma)$&$e(\sigma)=0$\\\hline

\end{tabular}

\begin{center}
Table 1
\end{center}

\medskip

\begin{tabular}{|c|c|c|c|c|c|c|c|c|c|c|} \hline
\text{type} &\text{representation } $(\rho, V)$ &$V^K$ &$V^{P_{02}}$ &$V^{P_2}$ &$V^{P_1}$ &$V^I$&&$C_1$&$C_2$&\text{tempered} \\
\hline

   {\rm I}&$\chi_1\times\chi_2\rtimes\sigma$
   &1&2&4&4&8  &&0&0  &$\chi_i$, $\sigma$ unitary \\
   \hline

   {\rm IIa}&$\chi\St_{\GL_2}\rtimes\sigma$&0&1&2&1&4   &&0&-1   &$\chi$, $\sigma$ unitary \\
   {\rm IIb}&$\chi\triv_{\GL_2}\rtimes\sigma$&1&1&2&3&4   &&0&1  & \\
   \hline

   {\rm IIIa}&$\chi\rtimes\sigma\St_{\GSp_2}$&0&0&1&2&4   &&0&0  &$\chi$, $\sigma$ unitary \\
   {\rm IIIb}&$\chi\rtimes\sigma\triv_{\GSp_2}$&1&2&3&2&4   &&0&0  &\\
   \hline

   {\rm IVa}&$\sigma\St_{\GSp_4}$&0&0&0&0&1     &&1&1   &$\sigma$ unitary \\
   {\rm IVd}&$\sigma\triv_{\GSp_4}$&1&1&1&1&1     &&1&2   &\\
   \hline

   {\rm Va}&$\delta([\xi,\nu\xi],\nu^{-\frac{1}{2}}\sigma)$&0&0&1&0&2     &&0&0    &$\sigma$ unitary \\
   {\rm Vb}&$L(\nu^{\frac{1}{2}}\xi\St_{\GL_2},\nu^{-\frac{1}{2}}\sigma)$&0&1&1&1&2   &&0&-1  &\\
   {\rm Vc}&$L(\nu^{\frac{1}{2}}\xi\St_{\GL_2},\xi\nu^{-\frac{1}{2}}\sigma)$&0&1&1&1&2    &&0&-1   &\\
   {\rm Vd}&$L(\nu\xi,\xi\rtimes\nu^{-\frac{1}{2}}\sigma)$&1&0&1&2&2    &&0&2   &\\
   \hline

   {\rm VIa}&$\tau(S,\nu^{-\frac{1}{2}}\sigma)$&0&0&1&1&3    &&0&0    &$\sigma$ unitary\\
   {\rm VIb}&$\tau(T,\nu^{-\frac{1}{2}}\sigma)$&0&0&0&1&1    &&0&0    &$\sigma$ unitary  \\
   {\rm VIc}&$L(\nu^{\frac{1}{2}}\St_{\GL_2},\nu^{-\frac{1}{2}}\sigma)$&0&1&1&0&1    &&0&-1   &\\
   {\rm VId}&$L(\nu,\triv_{F^\times}\rtimes\nu^{-\frac{1}{2}}\sigma)$&1&1&2&2&3   &&0&1   &\\
   \hline

\end{tabular}

\begin{center}
Table 2
\end{center}

\smallskip

We shall be dealing with irreducible unitary Iwahori-spherical representations of $G$ with trivial central characters. The condition on central characters
amounts to requiring that $\chi_1 \chi_2 \sigma^2=\triv_{F^\times}$ for type I representations; $\chi^2 \sigma^2=\triv_{F^\times}$ for type II representations;
$\chi \sigma^2 =\triv_{F^\times}$ for type III representations; and $\sigma ^2=\triv_{F^\times}$ for representations of types IV, V, and VI.

Let $m_i$ denote the number of type $i$ representations, counting multiplicity, in $L^2(\Gamma \bb G/Z)$. For instance,
there are two type IVd representations, each occurring once in $L^2(\Gamma \bb G/Z)$. These are both one-dimensional representations given by the
trivial character and its unramified quadratic twist $\xi$. Hence $m_{IVd}=2$. The notation $m_i$ is well-defined except in the cases of
types Vb and Vc, where a representation of one type can be regarded as a representation of the other by replacing $\sigma$ with $\sigma \xi$.
Therefore we shall not distinguish these two types and let $m_{Vbc}$ denote the number of type Vb or type Vc representations, counting multiplicity, in $L^2(\Gamma \bb G/Z)$.

\begin{prop}\label{multi}
(1) The Steinberg representation $\St_{\GSp_4}$ of type IVa occurs with multiplicity $\chi(X_{\Gamma})-1$ in $L^2(\Gamma \bb G/Z)$.

(2) The following identity holds: $$-m_{IIa}+m_{IIb}-m_{Vbc}+2m_{Vd}-m_{VIc}+m_{VId}+2=-2(q^2-1)N_p.$$
\end{prop}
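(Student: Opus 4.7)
The plan is to translate the combinatorial data of $X_\Gamma$ into dimensions of parahori-invariant subspaces of $\pi := L^2(\Gamma \bb G/Z)$, decompose $\pi = \bigoplus_\rho m_\rho \rho$ into irreducible unitary Iwahori-spherical representations with trivial central character, and then read both identities off the $C_1$ and $C_2$ columns of Table 2. From the parametrizations in \S 2.4--2.5, writing $E_i$ and $F$ for the numbers of (undirected) type-$i$ edges and chambers,
$$\dim \pi^K = 2N_p,\quad \dim \pi^{P_{02}} = 2N_{ns},\quad \dim \pi^{P_1} = 2E_1,\quad \dim \pi^{P_2} = E_2,\quad \dim \pi^I = 2F.$$
The factors of two come from $\bar P_{02}:P_{02}$ being of index $2$ and from $P_1$- and $I$-cosets parametrizing directed rather than undirected edges and chambers, while $P_2$-cosets parametrize type-$2$ edges with their canonical orientation from special to non-special.

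For (1), the Euler characteristic satisfies
$$2\chi(X_\Gamma) = 2(2N_p+N_{ns}) - 2(E_1+E_2) + 2F = 2\dim\pi^K + \dim\pi^{P_{02}} - \dim\pi^{P_1} - 2\dim\pi^{P_2} + \dim\pi^I = \sum_{\rho} m_\rho C_1(\rho).$$
Table~2 shows $C_1(\rho)=0$ except for types IVa and IVd, where it equals $1$, so $m_{IVa}+m_{IVd}=2\chi(X_\Gamma)$. Next I would identify $m_{IVd}$: the IVd representations with trivial central character are the one-dimensional characters $\triv$ and $\xi\circ\lambda$ (where $\lambda$ is the similitude), each occurring with multiplicity $1$ because the hypothesis $\ord_\pi \det\Gamma\subset 4\mathbb Z$ forces $\xi\circ\lambda$ to be trivial on $\Gamma$; hence $m_{IVd}=2$. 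The same hypothesis makes tensoring by $\xi\circ\lambda$ an auto-equivalence of $L^2(\Gamma\bb G/Z)$, interchanging $\St_{\GSp_4}$ with $\xi\St_{\GSp_4}$, so the two type IVa representations occur with equal multiplicities; therefore each has multiplicity $\tfrac{1}{2}m_{IVa}=\chi(X_\Gamma)-1$, proving (1).

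For (2), the key observation is that by its very definition $C_2-C_1 = 2\dim\pi^K - \dim\pi^{P_{02}}$, which the formulas above identify with $4N_p-2N_{ns}$; Proposition~\ref{vertex} then gives $N_{ns}=(q^2+1)N_p$, so this combination equals $-2(q^2-1)N_p$. Subtracting entries of Table~2 yields $(C_2-C_1)(\rho)$ equal to $-1$ for IIa, Vb, Vc, VIc; $+1$ for IIb, IVd, VId; $+2$ for Vd; and $0$ for all other types. Summing the contributions, using $m_{Vb}+m_{Vc}=m_{Vbc}$ and $m_{IVd}=2$, produces exactly the identity in (2). The principal obstacle is the initial bookkeeping in the displayed dimension formulas: the factor-of-two discrepancies between the stabilizers $K$, $\bar P_{02}$, $P_1$, $I$ on the one hand and the parabolics $P_{02}$, $P_2$ on the other must be tracked carefully before the read-off from Table~2 can be made; once those are straight, each of (1) and (2) is one linear combination away.
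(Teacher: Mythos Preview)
Your proof is correct and follows essentially the same approach as the paper: both parts are obtained by summing the parahori-dimension combinations $C_1$ and $C_2$ from Table~2 over the decomposition of $L^2(\Gamma\backslash G/Z)$, after identifying these global dimensions with the vertex, edge, and chamber counts of $X_\Gamma$. The only difference is cosmetic: for part~(2) the paper sums $C_2$ directly and then substitutes the formula $m_{IVa}=2\chi(X_\Gamma)-2$ back in, whereas you work with the difference $C_2-C_1=2\dim V^K-\dim V^{P_{02}}$, which cancels the type~IVa contribution up front and lands immediately on $4N_p-2N_{ns}$; this is a mild streamlining but not a different idea.
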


\begin{proof}
(1) Suppose $(\rho, V)$ is an irreducible Iwahori-spherical representation that occurs in the
decomposition of $L^2(\Gamma \bb G/Z)$. As recorded in column $C_1$ of Table 2,
when $\rho$ is not of type IVa or IVd we have
\begin{equation}\label{st1}
(2 \dim V^K + \dim V^{P_{02}}) -(\dim V^{P_1} + 2 \dim V^{P_2}) + \dim V^I = 0;
\end{equation}
otherwise we have
\begin{equation}\label{st2}
(2 \dim V^K + \dim V^{P_{02}}) -(\dim V^{P_1} + 2 \dim V^{P_2}) + \dim V^I = 1.
\end{equation}

Let $N_0$, $N_1$, and $N_2$ denote the numbers of vertices, edges, and chambers in $X_{\Gamma}$ respectively,
then $\chi(X_{\Gamma})=N_0-N_1+N_2$. From the parametrizations of vertices, edges, and chambers in $X_{\Gamma}$,
we have $2N_0=2\dim L^2(\Gamma \bb G/Z)^K + \dim L^2(\Gamma \bb G/Z)^{P_{02}}$, $2N_1=\dim L^2(\Gamma \bb G/Z)^{P_1}+
2\dim L^2(\Gamma \bb G/Z)^{P_2}$, and $2N_2=\dim L^2(\Gamma \bb G/Z)^I.$

Summing up the identities (\ref{st1}) and (\ref{st2}) over all the irreducible Iwahori-spherical representations, with multiplicities,  in
$L^2(\Gamma \bb G/Z)$  yields
$$2N_0-2N_1+2N_2= 2\chi(X_{\Gamma})= 2 + m_{IVa}.$$
Since $L^2(\Gamma \bb G/Z)$ contains two Steinberg representations, $\St_{\GSp_4}$ and $\xi \St_{\GSp_4}$, with the same
multiplicity, the multiplicity of each Steinberg representation equals one half the total multiplicity $m_{IVa}$, which is exactly $\chi(X_{\Gamma}) -1$.

(2) Write $m$ for the left-hand side of the identity to be verified. The same argument as in (1) applied to column $C_2$ yields
\begin{equation}\label{m}
m-2+m_{IVa}+2m_{IVd}=4 \dim L^2(\Gamma \bb G/Z)^K - 2N_1 +2N_2.
\end{equation}
It follows from the discussion above that $m_{IVa}=2N_0-2N_1+2N_2-2$, and $m_{IVd}=2$. Combined with  $$\dim L^2(\Gamma \bb G/Z)^K=2N_p=\frac{2}{q^2+3}N_0$$
from Proposition \ref{vertex}, (\ref{m}) gives rise to the desired expression for $m$.
\end{proof}

\section{Eigenvalue computation}
In this section we compute the eigenvalues of the vertex adjacency operators $A_1$, $A_2$, the edge adjacency operators $L_{P_1}$, $L_{P_2}$, and the
chamber adjacency operator $L_I$ on each type of representations classified in Table 1. Let
$$R(u)= \frac{\det(I-L_I u)\det(I-A_1 u+qA_2 u^2-q^3 A_1 u^3+q^6I u^4)}{\det(I-L_{P_1} u)\det(I-L_{P_2}u^2)}.$$
We shall record the contribution to $R(u)$ from each type of representations.

\subsection{Principal series representations}
We begin with a principal series representation $\rho_0 \simeq \chi_1 \times \chi_2 \rtimes \sigma$, induced from unramified characters $\chi_1$, $\chi_2$, $\sigma$ of
$F^\times$ with $\chi_1 \chi_2 \sigma^2=\triv_{F^\times}$. The representation $\rho_0$ is not assumed to be irreducible. It can be
realized in the space $V$ consisting of functions $f: G \to \mathbb{C}$ satisfying the transformation property
$$f(hg)=|a^2bc^{-\frac{3}{2}}|\chi_1(a)\chi_2(b)\sigma(c) f(g), \quad \text{for all } h=\left(\begin{matrix}a&*&*&*\\0&b&*&*\\0&0&cb^{-1}&*\\0&0&0&ca^{-1}\end{matrix}\right) \in B,$$ with right regular actions by $G$.
Let $$W=\{id, s_1, s_2, s_1s_2, s_2s_1, s_1s_2s_1, s_2s_1s_2, s_1s_2s_1s_2\}$$ be the Weyl group of $G$ with the generators $s_1$, $s_2$ defined
in \S 2.3. It follows from the decomposition $G=\bigsqcup_{w \in W} BwI$ that the space of Iwahori-invariant vectors $V^I$ has dimension eight.
Let $f_{\alpha}(x)$ be the function in $V$ supported on the coset $B\alpha I$ with $f_{\alpha}(\alpha I)=1$. Then $V^I$ has a basis consisting of
$f_1:=f_{id}$, $f_2:=f_{s_1}$, $f_3:=f_{s_2}$, $f_4:=f_{s_1s_2}$, $f_5:=f_{s_2s_1}$, $f_6:=f_{s_1s_2s_1}$, $f_7:=f_{s_2s_1s_2}$, $f_8:=f_{s_1s_2s_1s_2}$.

\subsubsection{Eigenvalues of $L_I$}
The operator $L_I$ acts on $V^I$ as described in \S3. A direct computation using the coset decomposition in (\ref{L_I}) gives the explicit description of $L_I$
on the chosen basis:
\begin{equation}\label{eigenL_I}
\begin{aligned}
L_I f_1&=q^{\f{1}{2}}\chi_2 \sigma(\pi) f_3, \quad
L_I f_2=(q-1)q^{\f{1}{2}}\chi_2 \sigma(\pi) f_3+q^{\f{1}{2}}\chi_1 \sigma(\pi) f_4,   \\
L_I f_3&=q^{\f{3}{2}}\sigma(\pi)f_1, \qquad
L_I f_4=q^{\f{3}{2}}\sigma(\pi)f_2,      \\
L_I f_5&=(q-1)q^{\f{3}{2}}\sigma(\pi)f_1+(q-1)q^{\f{1}{2}}\chi_1\sigma(\pi)f_4+q^{\f{1}{2}}\chi_1\chi_2\sigma(\pi)f_7,     \\
L_I f_6&=(q-1)q^{\f{3}{2}}\sigma(\pi)f_2+(q-1)q^{\f{3}{2}}\chi_2\sigma(\pi)f_3+q^{\f{1}{2}}\chi_1\chi_2\sigma(\pi)f_8,    \\
L_I f_7&=(q-1)q^{\f{3}{2}}\sigma(\pi)f_2+q^{\f{3}{2}}\chi_2\sigma(\pi)f_5,    \\
L_I f_8&=(q-1)q^{\f{5}{2}}\sigma(\pi)f_1+(q-1)^2q^{\f{3}{2}}\sigma(\pi)f_2+(q-1)q^{\f{3}{2}}\chi_2\sigma(\pi)f_5+q^{\f{3}{2}}\chi_1\sigma(\pi)f_6.
\end{aligned}
\end{equation}
\noindent This shows that $L_I$ on $V^I$ has eigenvalues $\pm \sqrt{q^2\chi_1(\pi)}$,  $\pm \sqrt{q^2\chi_2(\pi)}$, $\pm \sqrt{q^2\chi_1^{-1}(\pi)}$, and $\pm \sqrt{q^2\chi_2^{-1}(\pi)}$.

\subsubsection{Eigenvalues of $L_{P_1}$, $L_{P_2}$}
Let $W_i$ be the subgroup of order 2 in $W$ generated by $s_i$, for $i=1,2$.
Since $P_1=I\cup Is_1I$, the group $G=\bigsqcup_{w \in W}BwI$ can be written as $\bigsqcup_{w \in W/W_1} BwP_1$, where $W/W_1$ is represented by the Weyl elements $\{id, s_2, s_1s_2, s_2s_1s_2\}$.
Hence the space of $P_1$-invariant vectors $V^{P_1}$ has dimension four. Let $g_{\alpha}(x)$ be the function in $V$ supported on the coset $B\alpha P_1$
with $g_{\alpha}(\alpha P_1)=1$. Then $V^{P_1}$ has a basis consisting of $g_1:=g_{id}$, $g_2:=g_{s_2}$, $g_3:=g_{s_1s_2}$,
$g_4:=g_{s_2s_1s_2}$.
Applying the coset decomposition in (\ref{L_{P_1}}), the action of $L_{P_1}$ on $V^{P_1}$ can be described as follows.
\begin{equation}\label{eigenL_{P_1}}
\begin{aligned}
L_{P_1}g_1&=q^{\f{3}{2}}\sigma(\pi)g_1,\\
L_{P_1}g_2&=(q-1)q^{\f{3}{2}}\sigma(\pi)g_1+q^{\f{3}{2}}\chi_2\sigma(\pi)g_2,\\
L_{P_1}g_3&=(q-1)q^{\f{5}{2}}\sigma(\pi)g_1+(q-1)q^{\f{3}{2}}\chi_2\sigma(\pi)g_2+q^{\f{3}{2}}\chi_1\sigma(\pi)g_3,\\
L_{P_1}g_4&=(q-1)q^{\f{7}{2}}\sigma(\pi)g_1+(q-1)q^{\f{5}{2}}\chi_2\sigma(\pi)g_2+(q-1)q^{\f{3}{2}}\chi_1\sigma(\pi)g_3+q^{\f{3}{2}}\chi_1\chi_2\sigma(\pi)g_4.
\end{aligned}
\end{equation}
\noindent Hence $L_{P_1}$ on $V^{P_1}$ has eigenvalues $q^{\f{3}{2}}\chi_1\sigma(\pi)$, $q^{\f{3}{2}}\chi_2\sigma(\pi)$, $q^{\f{3}{2}}\chi_1\chi_2\sigma(\pi)$, and $q^{\f{3}{2}}\sigma(\pi)$.

\smallskip

Similarly, we have $P_2=I \cup Is_2I$ and $G=\bigsqcup_{w \in W/W_2} BwP_2$, with the quotient $W/W_2$ represented by $\{id, s_1, s_2s_1, s_1s_2s_1\}$.
Let $h_{\alpha}(x)$ be the function in $V$ supported on $B\alpha P_2$ with $h_{\alpha}(\alpha P_2)=1$.
Then $h_1:=h_{id}$, $h_2:=h_{s_1}$, $h_3:=h_{s_2s_1}$, and $h_4:=h_{s_1s_2s_1}$ form a basis of the 4-dimensional subspace $V^{P_2}$.
Using (\ref{L_{P_2}}), we obtain the action of $L_{P_2}$ on $V^{P_2}$:

\begin{equation}\label{eigenL_{P_2}}
\begin{aligned}
L_{P_2}h_1=&q^2\chi_2\sigma^2(\pi)h_1,\\
L_{P_2}h_2=&(q-1)q^2\chi_2\sigma^2(\pi)h_1+q^2\chi_1\sigma^2(\pi)h_2,\\
L_{P_2}h_3=&(q-1)q^3\chi_2\sigma^2(\pi)h_1+((q-1)q^2\chi_1\sigma^2(\pi)+(q-1)q^2\chi_1\chi_2\sigma^2(\pi))h_2+q^2\chi_1\chi_2^2\sigma^2(\pi)h_3,\\
L_{P_2}h_4=&((q-1)q^3\chi_1\chi_2\sigma^2(\pi)+(q-1)q^4\chi_2\sigma^2(\pi))h_1+((q-1)^2q^2\chi_1\chi_2\sigma^2(\pi)\\
           &+(q-1)q^3\chi_1\sigma^2(\pi))h_2+(q-1)q^2\chi_1\chi_2^2\sigma^2(\pi)h_3+q^2\chi_1^2\chi_2\sigma^2(\pi)h_4.
\end{aligned}
\end{equation}
\noindent Hence $L_{P_2}$ on $V^{P_2}$ has eigenvalues $q^2\chi_1(\pi)$, $q^2\chi_2(\pi)$, $q^2\chi_1^{-1}(\pi)$, and $q^2\chi_2^{-1}(\pi)$.

\subsubsection{Eigenvalues of $A_1$ and $A_2$}

The subspace $V^K$ is one-dimensional, generated by $\sum_{i=1}^8 f_i$. It follows from the coset decompositions in (\ref{A_1}) and (\ref{A_2}) that the Hecke operators $A_1$, $A_2$ act on $V^K$ via the scalar multiplications by $\lambda_1$ and $\lambda_2$ respectively, where
\begin{equation}\label{eigenA_1}
\begin{aligned}
\lambda_1&= q^{\f{3}{2}}(\chi_1\chi_2(\pi)+\chi_1(\pi)+\chi_2(\pi)+1)\sigma(\pi),\\
\lambda_2&= q^2(\chi_1^2\chi_2(\pi)+\chi_1\chi_2^2(\pi)+\chi_1(\pi)+\chi_2(\pi)+2\chi_1\chi_2(\pi))\sigma^2(\pi).
\end{aligned}
\end{equation}

In what follows let $(\rho, V_{\rho})$ be a unitary irreducible Iwahori-spherical representation in $L^2(\Gamma \bb G/Z)$, so it is one of the fifteen sub-types listed in Table 1. We discuss the contribution of $\rho$ to $R(u)$.

\subsection{Type I representations}
 Suppose $\rho$ has type I, then it has the form $\chi_1 \times \chi_2\rtimes \sigma$ with $\chi_1\chi_2\sigma^2=\triv_{F^\times}$. The eigenvalues of $L_I$ on $V_{\rho}^I$, $L_{P_1}$ on $V_{\rho}^{P_1}$, $L_{P_2}$ on $V_{\rho}^{P_2}$, and $A_1$, $A_2$ on $V_{\rho}^K$ are computed in \S5.1. One checks easily from the eigenvalues of $A_1$ and $A_2$ on $V_{\rho}^K$ that the zeros of $\det(I-A_1u+qA_2u^2-q^3A_1u^3+q^6Iu^4)$ on $V_\rho$ are $q^{-\f{3}{2}}\chi_1^{-1}\sigma^{-1}(\pi)$, $q^{-\f{3}{2}}\chi_2^{-1}\sigma^{-1}(\pi)$, $q^{-\f{3}{2}}\chi_1^{-1}\chi_2^{-1}\sigma^{-1}(\pi)$, and $q^{-\f{3}{2}}\sigma^{-1}(\pi)$. Comparing the zeroes of $\det(I-L_Iu)$, $\det(I-L_{P_1}u)$, $\det(I-L_{P_2}u)$, and $\det(I-A_1u+qA_2u^2-q^3A_1u^3+q^6Iu^4)$ on $V_\rho$, we see that the net contribution to $R(u)$ from $\rho$ is 1. Hence the total contribution to $R(u)$ from type I representations is trivial.

\subsection{Type II representations}
Suppose $\rho$ has type II, then it is a constituent of $\nu^{-\frac{1}{2}}\chi \times \nu^{\frac{1}{2}}\chi \rtimes \sigma$ with $\chi^2\sigma^2=\triv_{F^\times}$.
There are two constituents in the representation $\nu^{-\frac{1}{2}}\chi \times \nu^{\frac{1}{2}}\chi \rtimes \sigma$.
\subsubsection{Type IIb}
Consider first the case when $\rho \simeq \chi\triv_{\GL_2}\rtimes\sigma$.
In this case $\rho$ is the unique subrepresentation of $\nu^{-\frac{1}{2}}\chi \times \nu^{\frac{1}{2}}\chi \rtimes \sigma$. It can be realized in the parabolically induced
space $V_{\rho}$ consisting of functions $f: G \to \mathbb{C}$ satisfying
$$f(hg)=|\det(A)^{\f{3}{2}}c^{-\f{3}{2}}|\chi(\det(A))\sigma(c) f(g) \quad \text{for all }
h=\left(\begin{matrix}A&*\\0&cA'\end{matrix}\right) \in P,$$
where $A'=\left(\begin{matrix}0&1\\1&0\end{matrix}\right) ^tA^{-1}\left(\begin{matrix}0&1\\1&0\end{matrix}\right)$ and $P$ is the Siegel parabolic subgroup of $G$. To describe the basis of parahori-invariant subspaces, we shall let $f_{\alpha}$ (resp. $g_{\alpha}$, $h_{\alpha}$) denote the function in $V_{\rho}^I$ (resp. $V_{\rho}^{P_1}$, $V_{\rho}^{P_2}$) supported on the coset $P\alpha I$ (resp. $P\alpha P_1$, $P\alpha P_2$), taking value 1 at $\alpha$.

The space $V_{\rho}^I$ has dimension four. This can be observed from the decomposition $G=\bigsqcup_{w \in W} BwI = \bigsqcup_{w \in W_1\bb W} PwI$,
where the identity follows from the fact that $s_1 \in P$.  The quotient $W_1\bb W$ has four elements represented by $\{id, s_2, s_2s_1, s_2s_1s_2\}$.
Therefore $V_{\rho}^I$ is generated by $f_1:=f_{id}$, $f_2:=f_{s_2}$, $f_3:=f_{s_2s_1}$, $f_4:=f_{s_2s_1s_2}$.

The space $V_{\rho}^{P_1}$ has dimension three, while $V_{\rho}^{P_2}$ has dimension two, which can be seen from the decompositions
$G=\bigsqcup_{w \in W_1 \bb W/W_1} PwP_1$ and $G=\bigsqcup_{w \in W_1 \bb W/W_2} PwP_2$ respectively. Note that $$W_1 \bb W/W_1 =\{id, s_2, s_2s_1s_2\}\quad
\text{ and } \quad W_1 \bb W/W_2=\{id, s_2s_1\}.$$ So $V_{\rho}^{P_1}$ is generated by $g_1:=g_{id}$, $g_2:=g_{s_2}$ and $g_3:=g_{s_2s_1s_2}$; while $V_{\rho}^{P_2}$ is generated by $h_1:=h_{id}$ and  $h_2:=h_{s_2s_1}$. The space $V_{\rho}^K$ is 1-dimensional, generated by $\sum_{i=1}^4 f_i$.

Similar computation as in \S5.1 gives the explicit actions of $L_I$, $L_{P_1}$, $L_{P_2}$ on the given basis:

$L_{I}(f_{1})=q\chi\sigma(\pi)f_{2}$, \qquad $L_{I}(f_{2})=q^{\frac{3}{2}}\sigma(\pi)f_{1}$,

$L_{I}(f_{3})=(q-1)q^{\frac{3}{2}}\sigma(\pi)f_{1}+ (q-1)q\chi\sigma(\pi)f_{2}+ q^{\frac{1}{2}}\chi^{2}\sigma(\pi)f_{4}$,

$L_{I}(f_{4})=(q-1)q^{\frac{5}{2}}\sigma(\pi)f_{1}+q^{2}\chi\sigma(\pi)f_{3}$; \\

$L_{P_{1}}(g_{1})=q^{\frac{3}{2}}\sigma(\pi)g_{1}$,  \qquad    $L_{P_{1}}(g_{2})=(q^{2}-1)q^{\frac{3}{2}}\sigma(\pi)g_{1}+ q^{2}\chi\sigma(\pi)g_{2}$,

$L_{P_{1}}(g_{3})=(q-1)q^{\frac{7}{2}}\sigma(\pi)g_{1}+(q-1)q^{2}\chi\sigma(\pi)g_{2}+q^{\frac{3}{2}}\chi^{2}\sigma(\pi)g_{3}$;\\

$L_{P_{2}}(h_{1})=q^{\frac{5}{2}}\chi\sigma^{2}(\pi)h_{1}$,

$L_{P_{2}}(h_{2})=((q-1)q^{3}\chi^{2}\sigma^{2}(\pi)+(q^{2}-1)q^{\frac{5}{2}}\chi\sigma^{2}(\pi))h_{1}+q^{\frac{5}{2}}\chi^{3}\sigma^{2}(\pi)h_{2}$.

\noindent
Therefore $L_I$ on $V_{\rho}$ has eigenvalues $\pm \sqrt{q^{\frac{5}{2}}\chi(\pi)}$, $\pm \sqrt{q^{\frac{5}{2}}\chi^{-1}(\pi)}$;  $L_{P_{1}}$ on $V_{\rho}$ has eigenvalues $q^{\frac{3}{2}}\sigma(\pi)$,  $q^{\frac{3}{2}}\sigma^{-1}(\pi)$, $q^{2}\chi\sigma(\pi)$; and
$L_{P_{2}}$ on $V_{\rho}$ has eigenvalues $q^{\frac{5}{2}}\chi(\pi)$, $q^{\frac{5}{2}}\chi^{-1}(\pi)$.
Applying (\ref{eigenA_1}) with $\chi_{1}=\nu^{-\frac{1}{2}}\chi$ and $\chi_{2}=\nu^{\frac{1}{2}}\chi$, we see that the zeroes
of $\det(I- A_{1}u+ qA_{2}u^{2}- q^{3}A_{1}u^{3}+ q^{6}Iu^{4})$ arising from $\rho$ are $q^{-\frac{3}{2}}\sigma(\pi)$, $q^{-\frac{3}{2}}\sigma^{-1}(\pi)$, $q^{-2}\chi\sigma(\pi)$,  and $q^{-1}\chi\sigma(\pi)$.

Comparing the zeros on $V_\rho$ from all four determinants shows that the contribution to $R(u)$ from $\rho$ is $1-q\chi\sigma(\pi)u$.
Each $\rho \simeq \chi\triv_{\GL_2}\rtimes\sigma$ is accompanied with a twist $\xi \rho \simeq \chi\triv_{\GL_2} \rtimes \xi \sigma$ in
$L^2(\Gamma \bb G/Z)$ by the unramified quadratic character $\xi$ with $\xi(\pi)=-1.$ Together,
$\rho$ and $\xi\rho$ contribute the factor $(1-q\chi\sigma(\pi)u)(1-q\chi\sigma\xi(\pi)u)=(1-q^2u^2)$ to $R(u)$.
Hence the total contribution to $R(u)$ from type IIb representations is $(1-q^2u^2)^{\f{1}{2}m_{IIb}}$.

\subsubsection{Type IIa}
Suppose now $\rho \simeq \chi \St_{\GL_2}\rtimes \sigma$. Since $\chi \St_{\GL_2}\rtimes \sigma$ and $\chi\triv_{\GL_2}\rtimes\sigma$ are the two constituents of $\nu^{-\f{1}{2}}\chi \times \nu^{\f{1}{2}}\chi \rtimes \sigma$, the eigenvalues of $L_I$ on $V_{\rho}^I$ can be obtained easily by first finding the eigenvalues of $L_I$ on the full induced space $\nu^{-\f{1}{2}}\chi \times \nu^{\f{1}{2}}\chi \rtimes \sigma$, then excluding those eigenvalues coming from the type IIb representation $\chi\triv_{\GL_2}\rtimes\sigma$ studied in \S5.3.1. The first step can be done using the result of \S5.1.1 with $\chi_1=\nu^{-\f{1}{2}}\chi$ and $\chi_2=\nu^{\f{1}{2}}\chi$. From there, we deduce that the eigenvalues of $L_I$ on $V_{\rho}^I$ are
$\pm \sqrt{q^{\frac{3}{2}}\chi(\pi)}$, $\pm \sqrt{q^{\frac{3}{2}}\chi^{-1}(\pi)}$.

Computing in the same manner shows that the only eigenvalue of $L_{P_{1}}$ on $V_{\rho}^{P_1}$ is $q\chi\sigma(\pi)$, and
the eigenvalues of $L_{P_{2}}$ on $V_{\rho}^{P_2}$ are $q^{\frac{3}{2}}\chi(\pi)$, $ q^{\frac{3}{2}}\chi^{-1}(\pi)$.
There is no nontrivial $K$-invariant vector in $V_{\rho}$. Hence the contribution to $R(u)$ from $\rho$ and its twist $\xi \rho$ is $(1-q\chi\sigma(\pi)u)^{-1}(1-q\chi\sigma\xi(\pi)u)^{-1}=(1-q^2u^2)^{-1}$.
The total contribution to $R(u)$ from type IIa representations is $(1-q^2u^2)^{-\f{1}{2}m_{IIa}}$.

\subsection{Type III representations}
Suppose $\rho$ has type III so that it is a constituent of $\chi \times \nu^{-1} \rtimes  \nu^{\f{1}{2}}\sigma$ with $\chi\sigma^2=\triv_{F^\times}$. There
are two constituents in the representation $\chi \times \nu^{-1} \rtimes  \nu^{\f{1}{2}}\sigma$.

\subsubsection{Type IIIb}
Consider first $\rho \simeq \chi \rtimes \sigma\triv_{\GSp_2}$. It is the unique subrepresentation of $\chi \times \nu^{-1} \rtimes  \nu^{\f{1}{2}}\sigma$, and can be realized in the space $V_{\rho}$ consisting of functions $f: G \to \mathbb{C}$ satisfying
$$f(hg)=|t^2(ad-bc)^{-1}|\chi(t)\sigma(ad-bc)f(g) \quad \text{for all }
h=\left(\begin{matrix}t&*&*&*\\0&a&b&*\\0&c&d&*\\0&0&0&\Delta t^{-1}\end{matrix}\right) \in Q,$$
where $\Delta=ad-bc$, and $Q$ is the Klingen parabolic subgroup of $G$.
Let $f_{\alpha}$, $g_{\alpha}$, $h_{\alpha}$ be defined as in \S 5.3.1 type IIb case, with the Siegel parabolic $P$ there replaced with the
Klingen parabolic $Q$.

Since $s_2\in Q$, the space of Iwahori invariant vectors $V_{\rho}^{I}$ has dimension four from $G= \bigsqcup_{w\in W}BwI= \bigsqcup_{w \in W_2 \bb W}QwI$,
where $W_2\bb W= \{id, s_{1}, s_{1}s_{2}, s_{1}s_{2}s_{1}\}$. Then $V^{I}$ has a basis given by
$f_{1}:= f_{id}, f_{2}:= f_{s_{1}}, f_{3}:= f_{s_{1}s_{2}}$ and $ f_{4}:= f_{s_{1}s_{2}s_{1}}$.

Similar argument shows that $V_{\rho}^{P_{1}}$ and $V_{\rho}^{P_{2}}$ have dimensions two and three respectively.
The space $V_{\rho}^{P_1}$ has a basis consisting of $g_{1}:= g_{id}$ and $ g_{2}:= g_{s_{1}s_{2}}$; while $V_{\rho}^{P_{2}}$ has a basis
consisting of $h_{1}:= h_{id}, h_{2}:= h_{s_{1}}$ and $ h_{3}=h_{s_{1}s_{2}s_{1}}$. The 1-dimensional space $V_{\rho}^K$ is generated by $\sum_{i=1}^4 f_i$.

With the basis given above, the actions of $L_{I}, L_{P_1}, L_{P_2}$ can be described as follows.

$L_{I}(f_{1})=q\sigma(\pi)f_{1}$,

$L_{I}(f_{2})=(q-1)q\sigma(\pi)f_{1}+q\chi\sigma(\pi)f_{3}$,

$L_{I}(f_{3})=q^{2}\sigma(\pi)f_{2}$,

$L_{I}(f_{4})=(q-1)q^{2}\sigma(\pi)f_{1}+(q-1)q^{2}\sigma(\pi)f_{2}+q\chi\sigma(\pi)f_{4}$;\\

$L_{P_{1}}(g_{1})=q^{2}\sigma(\pi)g_{1}$,

$L_{P_{1}}(g_{2})=(q^{2}-1)q^{2}\sigma(\pi)g_{1}+q^{2}\chi\sigma(\pi)g_{2}$; \\

$L_{P_{2}}(h_{1})=q^{2}\sigma^{2}(\pi)h_{1}$,

$L_{P_{2}}(h_{2})=(q^{2}-1)q^{2}\sigma^{2}(\pi)h_{1}+q^{3}\chi\sigma^{2}(\pi)h_{2}$,

$L_{P_{2}}(h_{3})=((q-1)q^{3}\chi\sigma^{2}(\pi)+(q-1)q^{4}\sigma^{2}(\pi))h_{1}+(q-1)q^{3}\chi\sigma^{2}(\pi)h_{2}+q^{2}\chi^{2}\sigma^{2}(\pi)h_{3}.$

Hence the eigenvalues of $L_{I}$ on $V_{\rho}$ are $q\sigma(\pi)$, $q\sigma^{-1}(\pi)$, and $\pm q^{\frac{3}{2}}$.
The eigenvalues of $L_{P_1}$ on $V_{\rho}$ are $q^{2}\sigma(\pi)$ and $q^{2}\sigma^{-1}(\pi)$; and the eigenvalues of $L_{P_2}$ on $V_{\rho}$ are $q^{2}\sigma^2(\pi)$, $q^{2}\sigma^{-2}(\pi)$ and $q^{3}$.
It follows from (\ref{eigenA_1}) that the zeros of
$\det(I- A_{1}u+ qA_{2}u^{2}- q^{3}A_{1}u^{3}+ q^{6}Iu^{4})$ from $\rho$ are $q^{-2}\sigma(\pi)$, $q^{-1}\sigma(\pi)$, $q^{-2}\sigma^{-1}(\pi),$ and $ q^{-1}\sigma^{-1}(\pi)$.
So the contribution to $R(u)$ from $\rho$ is $$\f{(1-q\sigma(\pi)u)(1-q\sigma^{-1}(\pi)u)}{(1+q\sigma(\pi)u)(1+q\sigma^{-1}(\pi)u)}.$$
The representation $\rho$ and its twist $\xi\rho \simeq \chi \rtimes \xi\sigma \triv_{\GSp_2}$ together contribute $1$ to $R(u)$. Hence the total
contribution to $R(u)$ from type IIIb representations is trivial.

\subsubsection {Type IIIa}
Suppose $\rho \simeq \chi\rtimes\sigma\St_{\GSp_2}$, which is the unique quotient of the representation $\chi \times \nu^{-1} \rtimes
\nu^{\f{1}{2}}\sigma$. Following the same argument as in type IIa and applying results from \S5.4.1 and \S5.1 with $\chi_1=\chi$ and $\chi_2=\nu^{-1}$, we deduce that the eigenvalues of $L_I$ on $V_{\rho}^I$ are $-q\sigma(\pi)$, $-q\sigma^{-1}(\pi)$, $\pm q^{\frac{1}{2}}$. The eigenvalues of $L_{P_{1}}$ on $V_{\rho}^{P_1}$ are $q\sigma(\pi)$ and $q\sigma^{-1}(\pi)$; and the only eigenvalue of $L_{P_{2}}$ is $q$. There is no nontrivial $K$-invariant vector in $V_{\rho}$. Hence the contribution to $R(u)$ from $\rho$ is $$\f{(1+q\sigma(\pi)u)(1+q\sigma^{-1}(\pi)u)}{(1-q\sigma(\pi)u)(1-q\sigma^{-1}(\pi)u)}.$$ Pairing $\rho$ with its twist $\xi \rho$, we see that the total contribution to $R(u)$ from type IIIa representations is also trivial.

\subsection{Type IV representations}
Suppose $\rho$ is of type IV, then it is a constituent of $\nu^{-2}\times \nu^{-1}\rtimes \nu^{\f{3}{2}}\sigma$. There are four
constituents in $\nu^{-2}\times \nu^{-1}\rtimes \nu^{\f{3}{2}}\sigma$ corresponding to representations of types IVa-IVd. We shall not concern representations of types IVb and IVc as they will never occur in $L^2(\Gamma \bb G/Z)$.

\subsubsection {Type IVd}

In this case $\rho \simeq \sigma\triv_{\GSp_4}$ with $\sigma^2=\triv_{F^\times}$, which is a one-dimensional representation, and can be regarded as a subrepresentation of $\nu^{-2}\times \nu^{-1}\rtimes \nu^{\f{3}{2}}\sigma$.
Following the notation in \S 5.1 and using the results there with $\chi_1$, $\chi_2$, $\sigma$ in the inducing data equal to $\nu^{-2}$, $\nu^{-1}$ and $\nu^{\f{3}{2}}\sigma$ respectively, it is easy to observe that $V_{\rho}$ is generated by the single function $\sum_{i=1}^8 f_i$, and the eigenvalues of $L_I$, $L_{P_1}$,
$L_{P_2}$ restricted on $V_{\rho}$ are $q^2\sigma(\pi), q^3\sigma(\pi)$, and $q^4$ respectively.
The zeros of $\det(I- A_{1}u+ qA_{2}u^{2}- q^{3}A_{1}u^{3}+ q^{6}Iu^{4})$ from $\rho$ are $q^{-3}\sigma^{-1}(\pi), q^{-2}\sigma^{-1}(\pi), q^{-1}\sigma^{-1}(\pi),$ and $ \sigma^{-1}(\pi)$.

There are two type IVd representations in $L^2(\Gamma \bb G/Z)$; one has $\sigma(\pi)=1$, while the other has $\sigma(\pi)=-1$.
The total contribution to $R(u)$ from type IVd representations is $(1-u^2)(1-q^2u^2)$.

\subsubsection {Type IVa}
Suppose $\rho \simeq \sigma\St_{\GSp_4}$ with $\sigma^2=\triv_{F^\times}$. It can be regarded as a subrepresentation of $\nu^2\times \nu \rtimes \nu^{-\f{3}{2}}\sigma$. As in \S 5.1, we realize the principal series representation $\nu^2\times \nu \rtimes \nu^{-\f{3}{2}}\sigma$ in the standard model $V$ and use the same notation $f_{\alpha}$ for the function in $V$ supported on $B\alpha I$ with $f(\alpha I)=1$. According to Table 2, the spaces $V_{\rho}^{P_1}$, $V_{\rho}^{P_2}$, $V_{\rho}^K$ are all trivial; the space
$V_{\rho}^I$ has dimension one.

To determine $V_{\rho}^I$, consider the intertwining maps $$T_{s_1}: \nu^2 \times \nu \rtimes \nu^{\f{3}{2}}\sigma \to \nu \times \nu^2 \rtimes \nu^{-\f{3}{2}}\sigma$$ and $$T_{s_2}: \nu^2 \times \nu \rtimes \nu^{\f{3}{2}}\sigma \to \nu^2 \times \nu^{-1} \rtimes \nu^{-\f{1}{2}}\sigma$$ defined as in
Casselman $\S 3$ of \cite{Ca}. It follows from Theorem 3.4 in \cite{Ca} that if $l(\alpha w)>l(w)$, then
\begin{equation}\label{Ca}
\begin{aligned}
T_{\alpha}(f_w)&=\f{1}{q}f_w+\f{1}{q}f_{\alpha w} \\
T_{\alpha}(f_{\alpha w})&=f_w+f_{\alpha w},
\end{aligned}
\end{equation}
where $\alpha=s_1$ or $s_2$, $w \in W$, and $l(w)$ denotes the length of $w$. So the maps $T_{s_1}$ and $T_{s_2}$ can be described
explicitly on the basis $\{f_1, \cdots, f_8\}$ of $V^I$. Since $\rho$ is irreducible, and does not appear as a subrepresentation of
$\nu \times \nu^2\rtimes \nu^{-\f{3}{2}}\sigma$ or $\nu^2 \times \nu^{-1} \rtimes \nu^{-\f{1}{2}}\sigma$, the maps $T_{s_1}$ and
$T_{s_2}$ must be trivial on $V_{\rho}$. In particular, if $\phi \in V_{\rho}^I$, then $T_{s_1}(\phi)=T_{s_2}(\phi)=0$.
Writing $\phi$ as a linear combination of $f_1, \cdots, f_8$ and applying (\ref{Ca}) to solve the equations $T_{s_1}(\phi)=0$
and $T_{s_2}(\phi)=0$ simultaneously, we see that $\phi$ is a constant multiple of the function
$$q^4f_1-q^3f_2-q^3f_3+q^2f_4+q^2f_5-qf_6-qf_7+f_8.$$ Hence the eigenvalue of $L_I$ on $V_{\rho}^I$ is $-\sigma(\pi)$,
and the contribution to $R(u)$ from $\rho$ is $1+\sigma(\pi)u$. The total contribution to $R(u)$ from type IVa representations is
$(1-u^2)^{\f{1}{2}m_{IVa}}=(1-u^2)^{\chi(X_{\Gamma})-1}$ by Proposition \ref{multi} (1).

\subsection{Type V representations}
Representations of type V are the constituents of $\nu \xi \times\xi \rtimes \nu^{-1/2}\sigma$. There are four constituents in $\nu \xi \times\xi \rtimes \nu^{-1/2}\sigma$ corresponding to types Va-Vd.

\subsubsection{Types Vb and Vc}
We consider types Vb and Vc together since $L(\nu^{1/2}\xi\St_{\GL_2},\nu^{-1/2}\sigma)$ and $L(\nu^{1/2}\xi\St_{\GL_2},\xi \nu^{-1/2}\sigma)$ are twists
of each other. Suppose $\rho \simeq L(\nu^{1/2}\xi\St_{\GL_2},\nu^{-1/2}\sigma)$ with $\sigma^2=\triv_{F^\times}$, then $\rho$ can be regarded as a subrepresentation in $\nu^{\f{1}{2}}\xi\triv_{\GL_2} \rtimes \nu^{-\f{1}{2}}\xi\sigma$. Since $\nu^{\f{1}{2}}\xi\triv_{\GL_2} \rtimes \nu^{-\f{1}{2}}\xi\sigma$ is parabolically induced from a character on $P$, we realize it in the standard induced model $V$ as usual, and identify $V_{\rho}$ as a subspace in $V$. A basis
for $V_{\rho}^I$ has been given explicitly in Schmidt \cite[Corollary 2.6 (ii)]{Sc3}. It is obtained by exhibiting a nonzero intertwining operator $$A(-\f{1}{2}): \nu^{\f{1}{2}}\xi\triv_{\GL_2} \rtimes \nu^{-\f{1}{2}}\xi\sigma  \to \nu^{-\f{1}{2}}\xi\triv_{GL_2} \rtimes \nu^{\f{1}{2}}\xi\sigma$$ and then studying its restriction on the space of Iwahori-invariant vectors $V^I$ (cf. \cite[Proposition 2.5 (ii)]{Sc3}). Since $\rho$ does not appear as a subrepresentation in $\nu^{-\f{1}{2}}\xi\triv_{GL_2} \rtimes \nu^{\f{1}{2}}\xi\sigma$, the kernal of $A(-\f{1}{2})$ restricted on $V^I$ characterizes $V_{\rho}^I$.

We now record the relevant results from \cite{Sc3}. Let $f_{\alpha}(x)$ be the function in $V$ supported on $P\alpha I$ with $f_{\alpha}(\alpha I)=1$. Then the functions $f_1:=f_{id}$, $f_2:=f_{s_2}$, $f_3:=f_{s_2s_1}$, and $f_4:=f_{s_2s_1s_2}$ form a basis of $V^I$ just as in the case of type IIb. The space $V_{\rho}^I$
is generated by $\phi_1=q^2f_1+q^2f_2-f_3-f_4$ and $\phi_2=(q^3+q^2)f_1+(q^2-q)f_2+(q^2-q)f_3-(q+1)f_4$. The space $V_{\rho}^{P_1}$ is
spanned by $\phi_2$, while the space $V_{\rho}^{P_2}$ is spanned $\phi_1$. There is no nontrivial $K$-invariant vector in $V_{\rho}$.

In terms of the chosen basis described above, the actions of $L_{I}$, $L_{P_1}$, $L_{P_2}$ are given by
$L_{I}(\phi_{1})=q\sigma(\pi)\phi_{1}-\sigma(\pi)\phi_{2}$,
$L_{I}(\phi_{2})=2q^{2}\sigma(\pi)\phi_{1}-q\sigma(\pi)\phi_{2}$;
$L_{P_{1}}(\phi_{2})=-q\sigma(\pi)\phi_{2}$, and
$L_{P_{2}}(\phi_{1})=-q^2\sigma^{2}(\pi)\phi_{1}$.
Hence the eigenvalues of $L_I$ on $V_{\rho}^I$ are $\pm\sqrt{-q}$. The eigenvalue of $L_{P_{1}}$ on $V_{\rho}^{P_1}$ is $-q\sigma(\pi)$, and
the eigenvalue of $L_{P_{2}}$ on $V_{\rho}^{P_2}$ is $-q^{2}$. The contribution to $R(u)$ from $\rho$ is $(1+q\sigma(\pi)u)^{-1}$.
As $L(\nu^{1/2}\xi\St_{\GL_2},\xi \nu^{-1/2}\sigma) \simeq \xi \rho$, its contribution to $R(u)$ is $(1-q\sigma(\pi)u)^{-1}$. Therefore the total contribution
to $R(u)$ from types Vb and Vc representations is $(1-q^2u^2)^{-\f{1}{2}m_{Vbc}}$.

\subsubsection{Type Vd}
Consider $\rho \simeq L(\nu\xi,\xi\rtimes\nu^{-1/2}\sigma)$ with $\sigma^2=\triv_{F^\times}$, then $\rho$ can be regarded as a subrepresentation in
$\nu^{-\f{1}{2}}\xi\triv_{GL_2} \rtimes \nu^{\f{1}{2}}\xi\sigma$. Similar to \S 5.6.1, we realize $\nu^{-\f{1}{2}}\xi\triv_{GL_2} \rtimes \nu^{\f{1}{2}}\xi\sigma$ in the standard induced model $V$ and identify $V_{\rho}$ as a subspace in $V$. Define a basis $f_1, \cdots, f_4$ of $V^I$ similarly. Then the image of $A(-\f{1}{2})$
restricted on $V^I$ characterizes $V_{\rho}^I$. It follows from \cite[Proposition 2.5 (ii)]{Sc3} that $V_{\rho}^I$ is
generated by $\phi_1=\f{1}{2}(q^2+1)f_1-\f{1}{2}(q-1)f_2-\f{1}{2}(q-1)f_3+f_4$ and $\phi_2=-\f{1}{2}(q^2-q)f_1+qf_2+qf_3+\f{1}{2}(q-1)f_4$.
Since $s_1 \in P_1$ and $s_2 \in P_2$, it is easy to observe that $V_{\rho}^{P_1}$ has as a basis  $\phi_1$ and $\phi_2$, while $V_{\rho}^{P_1}$ and $V_{\rho}^K$ are both 1-dimensional spaces generated by $\phi_1+\phi_2$.

The actions of $L_I$, $L_{P_1}$, $L_{P_2}$ on the chosen basis can be computed explicitly,
from which we find that the eigenvalues of $L_I$ on $V_{\rho}^I$ are $\pm\sqrt{-q^{3}}$; the eigenvalues of $L_{P_{1}}$ on $V_{\rho}^{P_1}$ are $\pm q^{2}\sigma(\pi)$; the eigenvalue of $L_{P_{2}}$ on $V_{\rho}^{P_2}$ is $-q^{3}$; and the zeros of $\det(I- A_{1}u+ qA_{2}u^{2}- q^{3}A_{1}u^{3}+ q^{6}Iu^{4})$ are $\pm q^{-2}\sigma^{-1}(\pi), \pm q^{-1}\sigma^{-1}(\pi)$. So the contribution from $\rho$ to $R(u)$ is $1-q^2u^2$, and the
total contribution to $R(u)$ from type Vd representations is $(1-q^2u^2)^{m_{Vd}}$.

\subsubsection{Type Va}
Suppose $\rho \simeq \delta([\xi,\nu\xi],\nu^{-1/2}\sigma)$ with $\sigma^2=\triv_{F^\times}$. As this is the only remaining case in Group V, the eigenvalues for the relevant adjacency operators on $V_{\rho}$ can be determined by applying results from sections 5.6.1, 5.6.2, and 5.1 with the inducing data $\chi_1$, $\chi_2$, $\sigma$ being $\nu \xi$, $\xi$, and $\nu^{-\f{1}{2}}\sigma$ respectively.
Then the eigenvalues of $L_I$ on $V_{\rho}^I$ are $\pm\sqrt{-q}$, and the eigenvalue of $L_{P_{2}}$ on $V_{\rho}^{P_2}$ is $-q$. Both of the spaces $V_{\rho}^{P_1}$ and $V_{\rho}^K$ are trivial. The contribution to $R(u)$ from type Va representations is trivial.

\subsection{Type VI representations}
Representations in type VI are constituents of $\nu \times \triv_{F^\times} \rtimes \nu^{-\f{1}{2}}\sigma$. When $\rho$ is of type VIb, VIc, or VId, a basis for
$V_{\rho}^I$ has been obtained in Schmidt \cite{Sc3} by studying appropriate intertwining operators. With the given basis of $V_{\rho}^I$ and the dimension count in Table 2, it becomes easy to find a basis for each of the spaces $V_{\rho}^{P_1}$, $V_{\rho}^{P_2}$, $V_{\rho}^K$. Then the action of each adjacency operator on $V_{\rho}$ can be described explicitly, so the corresponding eigenvalues can be computed directly. Once the eigenvalues from types VIb, VIc, VId are known, we then apply results in $\S5.1$ to determine the eigenvalues from the remaining type VIa case, just like the work done before. Since the argument is now routine,
we shall skip details, only present the final results.

\subsubsection{Type VIb} Consider $\rho \simeq \tau(T,\nu^{-1/2}\sigma)$ with $\sigma^2=\triv_{F^\times}$. Then $\rho$ can be regarded as a subrepresentation
in $\nu^{\f{1}{2}}1_{GL_{2}}\rtimes\nu^{-\frac{1}{2}}\sigma$. We realize $\nu^{\f{1}{2}}1_{GL_{2}}\rtimes\nu^{-\frac{1}{2}}\sigma$ in the standard induced
model $V$ and identify $V_{\rho}$ as a subspace in $V$. Let $f_1, \cdots, f_4$ be the functions in $V^I$ defined as in the case of type IIb. According to Schmidt \cite[Corollary 2.6]{Sc3}, the space $V_{\rho}^I$ is one-dimensional, generated by the function $q^2f_1-qf_2-qf_3+f_4$. This function is also a basis for $V^{P_1}$. Both of the spaces $V_{\rho}^{P_2}$ and $V_{\rho}^K$ are trivial. The eigenvalue of $L_I$ on $V_{\rho}^I$ is $-q\sigma(\pi)$ and the eigenvalue of $L_{P_1}$ on $V_{\rho}^{P_1}$ is $q\sigma(\pi)$. The total contribution to $R(u)$ from type VIb representations is trivial.

\subsubsection{Type VIc}
Suppose $\rho \simeq L(\nu^{\f{1}{2}}\St_{\GL_2},\nu^{-\f{1}{2}}\sigma)$ with $\sigma^2=\triv_{F^\times}$. Then $\rho$ can be regarded as a subrepresentation of
$1_{F^{\times}}\rtimes\sigma 1_{GSP_{2}}$. Realize $1_{F^{\times}}\rtimes\sigma1_{GSP_{2}}$ in the standard induced model $V$ and let $f_1, \cdots, f_4$ be
the functions in $V^I$ defined as in the case of type IIIb. By Schmidt \cite[Corollary 3.6]{Sc3}, $V_{\rho}^I$ is generated by the function $q^2f_1-qf_2-qf_3+f_4$, which also generates $V^{P_2}$. The spaces $V^{P_1}$ and $V^K$ are both trivial. The eigenvalue of $L_I$ on $V_{\rho}^I$ is $q\sigma(\pi)$ and the eigenvalue of $L_{P_2}$ on $V_{\rho}^{P_2}$ is $q^2$. The total contribution to $R(u)$ from type VIc representations is $(1-q^2u^2)^{-\f{1}{2}m_{VIc}}$.

\subsubsection{Type VId}
Suppose $\rho \simeq L(\nu,1_{F^\times}\rtimes\nu^{-\f{1}{2}}\sigma)$ with $\sigma^2=\triv_{F^\times}$. Then $\rho$ is a subrepresentation of $\nu^{-\frac{1}{2}}1_{GL_{2}}\rtimes\nu^{\frac{1}{2}}\sigma$, which again is realized in the standard induced model $V$. Let $f_1, \cdots, f_4$ be the functions
in $V_I$ defined as in the case of type IIb. Examining the kernel of the intertwining operator in \cite[Proposition 2.5]{Sc3} shows that the space $V_{\rho}^I$ has as a basis  $f_1+f_2$, $f_3+f_4$ and $qf_1+f_3$. The space $V_{\rho}^{P_1}$ is spanned by the functions $f_1+f_2+f_3+f_4$ and $q^2f_1+qf_2+qf_3+f_4$, while the space $V^{P_2}$ is spanned by $f_1+f_2$ and $f_3+f_4$. The 1-dimensional space $V^K$ is spanned by $f_1+f_2+f_3+f_4$.
The eigenvalues of $L_{I}$ on $V_{\rho}^I$ are $\pm\sqrt{q^{3}}$ and $q\sigma(\pi)$. The eigenvalues of $L_{P_1}$ on $V_{\rho}^I$ are
$q^{2}\sigma(\pi)$ and $q^{2}\sigma(\pi)$; and the eigenvalues of $L_{P_2}$ on $V_{\rho}^{P_2}$ are $q^{3}$ and $q^{2}$.
The zeroes of $\det(I- A_{1}u+ qA_{2}u^{2}- q^{3}A_{1}u^{3}+ q^{6}Iu^{4})$ arising from $\rho$ are $q^{-2}\sigma^{-1}(\pi), q^{-2}\sigma^{-1}(\pi), q^{-1}\sigma^{-1}(\pi),$ and $q^{-1}\sigma^{-1}(\pi)$. The total contribution to $R(u)$ from type VId representations is $(1-q^2u^2)^{\f{1}{2}m_{VId}}$.

\subsubsection{Type VIa} Suppose $\rho \simeq \tau(S,\nu^{-\f{1}{2}}\sigma)$ with $\sigma^2=\triv_{F^\times}$. The eigenvalues of $L_I$ on $V_{\rho}^I$ are $\pm\sqrt{q}$ and $ -q\sigma(\pi)$. The eigenvalue of $L_{P_1}$ on $V_{\rho}^{P_1}$ is $q\sigma(\pi)$; and the eigenvalue of $L_{P_2}$ on $V_{\rho}^{P_2}$ is $q$.
There is no nontrivial $K$-invariant vector. The total contribution to $R(u)$ from type VIa representations is trivial.

\section{Proof of the zeta identity}

Table 3 summarizes results from previous section. The column $L_I$ shows the eigenvalues of $L_I$ on each type of representation $(\rho, V_{\rho})$;
these eigenvalues are the reciprocals of the zeros of $\det(I-L_Iu)$ arising from $\rho$. Similarly, the column $L_{P_i}$ shows the reciprocals of the zeros
of $\det(I-L_{P_i}u)$ coming from $\rho$, for $i=1, 2$. The last column $A_1, A_2$ records the reciprocals of the zeros of $\det(I-A_{1}u+qA_{2}u^{2}-q^{3}A_{1}u^{3}+q^{6}Iu^{4})$ arising from $\rho$.
\begin{figure}
\begin{tabular}{|c|c|c|c|c|c|}
\hline
type& $L_I$& $L_{P_1}$&  $L_{P_2}$ & $A_1$, $A_2$ & condition\\
\hline

I & $\pm \sqrt{q^{2}\chi_{1}^{-1}(\pi)}$  & $q^{\frac{3}{2}}\chi_1\sigma(\pi)$ & $q^{2}\chi_1^{-1}(\pi)$ & $q^{\frac{3}{2}}\chi_1\sigma(\pi)$ & $\chi_1\chi_2\sigma^2 = \bf 1$\\
  \  & $\pm \sqrt{q^{2}\chi_{2}^{-1}(\pi)}$ & $q^{\frac{3}{2}}\chi_{2}\sigma(\pi)$ & $q^{2}\chi_{2}^{-1}(\pi)$ &  $q^{\frac{3}{2}}\chi_{2}\sigma(\pi)$& \\
  \  & $\pm \sqrt{q^{2}\chi_{1}(\pi)}$ & $q^{\frac{3}{2}}\chi_{1}\chi_{2}\sigma(\pi)$ & $q^{2}\chi_{1}(\pi)$ & $q^{\frac{3}{2}}\chi_{1}\chi_{2}\sigma(\pi)$ &\\
  \  & $\pm \sqrt{q^{2}\chi_{2}(\pi)}$ & $ q^{\frac{3}{2}}\sigma(\pi)$ & $q^{2}\chi_{2}(\pi)$ & $q^{\frac{3}{2}}\sigma(\pi)$& \\
 \hline

IIa & $\pm \sqrt{q^{\frac{3}{2}}\chi^{-1}(\pi)}$ & $q\chi\sigma(\pi)$ & $q^{\frac{3}{2}}\chi^{-1}(\pi)$ & none & $\chi^2\sigma^2 = \bf 1$\\
 \  & $\pm \sqrt{q^{\frac{3}{2}}\chi(\pi)}$ & \ & $ q^{\frac{3}{2}}\chi(\pi)$ & \ & \\
 \hline
IIb & $\pm \sqrt{q^{\frac{5}{2}}\chi^{-1}(\pi)}$ & $q^{\frac{3}{2}}\sigma^{-1}(\pi)$ & $q^{\frac{5}{2}}\chi^{-1}(\pi)$ &  $q^{\frac{3}{2}}\sigma^{-1}(\pi)$ & $\chi^2\sigma^2 = \bf 1$ \\
 \ & $\pm \sqrt{q^{\frac{5}{2}}\chi(\pi)}$ & $q^{\frac{3}{2}}\sigma(\pi)$ & $q^{\frac{5}{2}}\chi(\pi)$ &  $q^{\frac{3}{2}}\sigma(\pi)$  &\\
 \ & \  & $q^{2}\chi\sigma(\pi)$ & \  & $q^{2}\chi\sigma(\pi), q\chi\sigma(\pi)$ &\\
 \hline

IIIa & $\pm \sqrt{q}$ & $q\sigma^{-1}(\pi)$ & $q$ & none & $\chi \sigma^2 = \bf 1$\\
 \  & $-q\sigma^{-1}(\pi), -q\sigma(\pi)$ & $q\sigma(\pi)$ & \  & \ & \\
 \hline
IIIb & $\pm \sqrt{q^{3}}$ & $q^{2}\sigma^{-1}(\pi)$ & $q^{3}$ & $q^{2}\sigma^{-1}(\pi), q\sigma^{-1}(\pi) $ & $\chi \sigma^2 = \bf 1$ \\
 \  & $q\sigma^{-1}(\pi), q\sigma(\pi)$ & $q^{2}\sigma(\pi)$ & $q^{2}\sigma^{-2}(\pi), q^{2}\sigma^2(\pi)$  & $q^{2}\sigma(\pi), q\sigma(\pi)$ & \\
 \hline

IVa & $-\sigma(\pi)$  & none & none & none & $\sigma^2 = \bf 1$\\
\hline
IVd &  $q^{2}\sigma(\pi)$ & $q^{3}\sigma(\pi)$  & $q^{4}$ & $q^{3}\sigma(\pi), q^{2}\sigma(\pi)$& $\sigma^2 = \bf 1$\\
  \ & \  &\  &\  & $q\sigma(\pi), \sigma(\pi)$ &\\
\hline

Va & $\pm \sqrt{-q}$ & none & $-q$ & none & $\sigma^2 = \bf 1$\\
\hline
Vb & $\pm \sqrt{-q^{2}}$ & $-q\sigma(\pi)$ & $-q^{2}$ & none & $\sigma^2 = \bf 1$\\
\hline
Vc & $\pm \sqrt{-q^{2}}$ & $q\sigma(\pi)$ & $-q^{2}$ & none & $\sigma^2 = \bf 1$\\
\hline
Vd & $\pm \sqrt{-q^{3}}$ & $\pm q^{2}\sigma(\pi)$ & $-q^{3}$ & $\pm q^{2}\sigma(\pi), \pm q\sigma(\pi)$ & $\sigma^2 = \bf 1$\\
\hline

VIa & $\pm \sqrt{q}, -q\sigma(\pi)$ & $q\sigma(\pi)$ & $q$ & none & $\sigma^2 = \bf 1$\\
\hline
VIb & $-q\sigma(\pi)$ & $q\sigma(\pi)$ & none & none & $\sigma^2 = \bf 1$\\
\hline
VIc & $q\sigma(\pi)$ & none & $q^{2}$ & none & $\sigma^2 = \bf 1$\\
\hline
VId & $\pm \sqrt{q^{3}}, q\sigma(\pi)$ & $q^{2}\sigma(\pi), q^{2}\sigma(\pi)$& $q^{3}, q^{2}$ & $q^{2}\sigma(\pi), q^{2}\sigma(\pi)$ & $\sigma^2 = \bf 1$\\
\ & \ & \ & \ & $q\sigma(\pi), q\sigma(\pi)$ & \\
\hline

\end{tabular}

\begin{center}
Table 3
\end{center}
\end{figure}
\smallskip

We finish the proofs of Corollary 4.3 and Theorem 4.2.
By the computations in $\S 5$, the net contribution to $R(u)$ from each type of representations in Table 1 can be listed as follows.
\begin{itemize}
\item Type I: \quad 1;

\item Type II: \quad $(1-q^2u^2)^{-\f{1}{2}m_{IIa}+\f{1}{2}m_{IIb}}$;

\item Type III: \quad 1;

\item Type IV: \quad $(1-u^2)^{\chi(X_{\Gamma})}(1-q^2u^2)$;

\item Type V: \quad $(1-q^2u^2)^{-\f{1}{2}m_{Vbc}+m_{Vd}}$;

\item Type VI: \quad $(1-q^2u^2)^{-\f{1}{2}m_{VIc}+\f{1}{2}m_{VId}}$.

\end{itemize}

\noindent Taking product of the contributions to $R(u)$ over all six types of representations gives
$$R(u)= (1-u^2)^{\chi(X_{\Gamma})}(1-q^2u^2)^{\f{1}{2}m},$$ where $m=-m_{IIa}+m_{IIb}-m_{Vbc}+2m_{Vd}-m_{VIc}+m_{VId}+2.$
By Proposition \ref{multi} (2), this gives $$R(u)= (1-u^2)^{\chi(X_{\Gamma})}(1-q^2u^2)^{-(q^2-1)N_p}.$$
In other words, $$\frac{(1-u^2)^{\chi(X_{\Gamma})}(1-q^2 u^2)^{-(q^2-1)N_p}}{\det(I-A_1 u+qA_2 u^2-q^3 A_1 u^3+q^6I u^4)}
= \frac{\det(I-L_I u)}{\det(I-L_{P_1} u)\det(I-L_{P_2}u^2)},$$
which completes the proof of Corollary 4.3 and hence Theorem 4.2 follows.

\bigskip

\section{Ramanujan complexes}
The notion of Ramanujan complexes was introduced in Li \cite{Li1} and Lubotzky-Samuels-Vishne \cite{LSV2} for finite quotient complexes arising from the Bruhat-Tits building $\B_n$ associated to $\SL_n(F)$. It generalizes the concept of Ramanujan graphs, which corresponds to the case $n=2$. Originally the definition was given in terms of eigenvalues of vertex adjacency operators. From the representation-theoretic point of view, it can be rephrased as follows (cf. \cite[Proposition 1.5]{LSV2}). For a discrete torsion-free co-compact subgroup $\Gamma$ of $\PGL_n(F)$, the finite complex $\Gamma \bb \B_n$ is Ramanujan if and only
if all the irreducible unramified infinite-dimensional representations that occur in $L^2(\Gamma \bb \PGL_n(F))$ are tempered. In \cite{KLW}, equivalent conditions for $\Gamma \bb \B_3$ being Ramanujan are given in terms of eigenvalues of vertex, edge, and chamber adjacency operators respectively.

We extend the notion of Ramanujan complexes to the symplectic case. Given a discrete torsion-free cocompact-mod-center subgroup $\Gamma$ of $G$ as before, define $X_{\Gamma}$ to be Ramanujan if all the irreducible unramified infinite-dimensional representations that occur in $L^2(\Gamma \bb G/Z)$ are tempered. Since only infinite-dimensional representations are concerned in the definition, the zeros of each determinant listed in Table 3 arising from 1-dimensional representations (type IVd) are called trivial zeros, whereas those arising from infinite-dimensional representations are nontrivial.

The unramified representations listed in Table 1 are those with $\dim V^K=1$; they are of types I, IIb, IIIb, Vd, or VId after excluding the finite-dimensional case. These representations are tempered only when they are of type I with $\chi_1$, $\chi_2$ and $\sigma$ being unitary.
Using the data from Table 1 and Table 3, one easily checks that if $X_{\Gamma}$ is Ramanujan, then all the nontrivial zeros of $\det(I-A_{1}u+qA_{2}u^{2}-q^{3}A_{1}u^{3}+q^{6}Iu^{4})$ have absolute value $q^{-\frac{3}{2}}$. On the other hand, if there is a nontempered unramified
infinite-dimensional representation occurring in $L^2(\Gamma \bb G/Z)$ so that $X_{\Gamma}$ is not Ramanujan, the determinant
$\det(I-A_{1}u+qA_{2}u^{2}-q^{3}A_{1}u^{3}+q^{6}Iu^{4})$ will have a nontrivial zero with absolute value other than $q^{-\frac{3}{2}}$.
Similar observations applied to the operators $L_{P_1}$, $L_{P_2}$, and $L_{I}$ give the following analog of Theorem 2 in \cite{KLW}.

\begin{theorem}\label{ram}

The following statements are equivalent:

$(1)$ $X_{\Gamma}$ is Ramanujan.

$(2)$ All the nontrivial zeros of $\det(I-A_{1}u+qA_{2}u^{2}-q^{3}A_{1}u^{3}+q^{6}Iu^{4})$ have absolute value $q^{-\frac{3}{2}}$.

$(3)$ All the nontrivial zeros of $\det(I - L_{P_{1}} u)$ have absolute values $q^{-\frac{3}{2}}$ or $q^{-1}$.

$(4)$ All the nontrivial zeros of $\det(I - L_{P_{2}} u)$ have absolute values $q^{\alpha}$, where $-2 \le \alpha \le -1$.

$(5)$ All the nontrivial zeros of $\det(I - L_{I} u)$ have absolute values $1$ or $q^{\beta}$, where $-1 \le \beta \le -\frac{1}{2}$.

\end{theorem}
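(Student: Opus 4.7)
The plan is to deduce the five-way equivalence by inspecting Table~3, which records the eigenvalues of $L_I$, $L_{P_1}$, $L_{P_2}$ and of the vertex polynomial $\det(I-A_1u+qA_2u^2-q^3A_1u^3+q^6Iu^4)$ on every Iwahori-spherical irreducible constituent of $L^2(\Gamma\bb G/Z)$. Since this $G$-module decomposes as a direct sum of unitary irreducibles with trivial central character, the multiset of nontrivial zeros of each of the four determinants is precisely the union, over all infinite-dimensional irreducible constituents, of the reciprocals of the tabulated eigenvalues; the trivial zeros correspond to the finite-dimensional type IVd constituents.

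First I would isolate the unramified irreducibles (those with $\dim V^K=1$), which by Table~2 are types I, IIb, IIIb, IVd, Vd, VId. Among the infinite-dimensional ones, the unitarity classification in Table~1 shows that the only tempered one is type~I with $e(\chi_1)=e(\chi_2)=e(\sigma)=0$. Consequently, the negation of (1) is equivalent to the occurrence in $L^2(\Gamma\bb G/Z)$ of either a complementary-series type~I representation or one of types IIb, IIIb, Vd, VId.

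For each equivalence $(1)\Leftrightarrow(k)$ with $k\in\{2,3,4,5\}$, the implication $(1)\Rightarrow(k)$ is obtained by a direct read-off: assuming only tempered unramified infinite-dimensional representations occur, every nontrivial zero arising from such a representation lies in the stated absolute-value window, since $|\chi_i(\pi)|=|\sigma(\pi)|=1$ in the tempered case. The potential non-unramified irreducibles that may still occur without affecting the Ramanujan condition --- namely types IIa, IIIa, IVa, Va, Vb, Vc, VIa, VIb, VIc --- must also be checked; Table~3 combined with the unitarity bounds of Table~1 shows that their contributions likewise fall inside the prescribed windows. The converse $(k)\Rightarrow(1)$ is then established by the contrapositive: for each non-tempered unramified infinite-dimensional type, exhibit an eigenvalue of the relevant operator whose reciprocal falls strictly outside the window of $(k)$.

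The main obstacle is the case-by-case bookkeeping for the converse implications when $k\in\{3,4,5\}$. Here the windows $\{q^{-3/2},q^{-1}\}$ in (3), $[q^{-2},q^{-1}]$ in (4), and $\{1\}\cup[q^{-1},q^{-1/2}]$ in (5) are calibrated precisely to the envelope of all tempered contributions, so one must verify that each of the non-tempered unramified infinite-dimensional scenarios --- each complementary-series branch of type~I catalogued in Table~1, together with types IIb, IIIb, Vd, VId --- forces an out-of-window eigenvalue for the appropriate operator. For instance, a complementary-series type~I with deformation $e(\chi_1)=\beta>0$ produces an $L_{P_2}$-eigenvalue $q^2\chi_1^{-1}(\pi)$ of absolute value $q^{2+\beta}$, whose reciprocal lies strictly below $q^{-2}$; analogous inequalities, to be checked one operator and one type at a time, handle the remaining cases. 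The equivalence $(1)\Leftrightarrow(2)$ is the simplest, since the vertex polynomial picks up only unramified constituents, so no non-unramified bookkeeping is needed and the window collapses to the single value $q^{-3/2}$. Once these inequalities are verified exhaustively, the five-way equivalence follows.
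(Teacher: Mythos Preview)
Your proposal is correct and follows essentially the same approach as the paper: both proofs reduce the five-way equivalence to a case-by-case inspection of Tables~1--3, identifying the unramified infinite-dimensional types as I, IIb, IIIb, Vd, VId, observing that only type~I with unitary data is tempered, and then reading off the eigenvalue windows for each operator. Your write-up is in fact more explicit than the paper's (which dispatches the argument in a single paragraph before stating the theorem), particularly in flagging that the non-unramified Iwahori-spherical types must be checked to lie inside the windows for the forward implications $(1)\Rightarrow(3),(4),(5)$.
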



\begin{thebibliography}{99}





\bibitem{Bo}
A. Borel, \emph{Admissible representations of a semi-simple group over a
local field with vectors fixed under an Iwahori subgroup},  Invent. Math.
\textbf{35} (1976), 233--259.


\bibitem{BH}
A. Borel and G. Harder, \emph{Existence of discrete cocompact subgroups of reductive
groups over local fields}, J. Reine Angew. Math. \textbf{298} (1978), 53--64.


\bibitem{Br}
K. Brown, \emph{Buildings}, Springer-Verlag, New York, 1989.


\bibitem{Ca}
W. Casselman, \emph{The unramified principal series of p-adic
groups. I. The spherical function}, Compositio Math. \textbf{40}
(1980), no. 3, 387--406.



\bibitem{Ga}
P. Garrett, \emph{Buildings and classical groups}, Chapman \& Hall, London, 1997.


\bibitem{Ha}
K. Hashimoto, \emph{Zeta functions of finite graphs and
representations of $p$-adic groups}. Automorphic Forms and Geometry
of Arithmetic Varieties, 211--280, Adv. Stud. Pure Math., 15,
Academic Press, Boston, MA, 1989.

\bibitem{Ih}
Y. Ihara, \emph{On discrete subgroups of the two by two projective
linear group over $p$-adic fields}, J. Math. Soc. Japan \textbf{18}
(1966), 219--235.


\bibitem{KL}
M.-H. Kang and W.-C. W. Li, \emph{Zeta functions of complexes arising from $\PGL(3)$}, submitted.


\bibitem{KLW}
M.-H. Kang, W.-C. W. Li, and C.-J. Wang, \emph{The zeta functions of complexes from $\PGL(3)$: A representation-theoretic approach},
Israel J. Math \textbf{177} (2010), 335--348.

\bibitem{LRS}
G. Laumon, M. Rapoport and U. Stuhler, \emph{ $\mathcal D$-elliptic
sheaves and the {L}anglands correspondence}, Invent. Math. \textbf{113}
(1993), 217--338.

\bibitem{Li1}
W.-C. W. Li, \emph{Ramanujan hypergraphs}, Geom. Funct. Anal. \textbf{
14} (2004), 380--399.

\bibitem{Li2}
W.-C. W. Li, \emph{Zeta functions of group based graphs and complexes},
Fields Institute Communications, vol. \textbf{60} (2011), 225--236.

\bibitem{LPS}
A. Lubotzky, R. Phillips, and P. Sarnak, \emph{Ramanujan graphs},
Combinatorica \textbf{8} (1988), 261--277.



\bibitem{LSV2}
A. Lubotzky, B. Samuels, U. Vishne, \emph{Ramanujan complexes of type $\tilde{A_d}$},
Israel J. Math. \textbf{149} (2005), 267--299.



\bibitem{SaT}
P. Sally and M. Tadi\'c, \emph{Induced representations and classifications for $\GSp(2,F)$ and $\Sp(2,F)$},  M\'em. Soc. Math. France (N.S.)  No. 52  (1993), 75--133.

\bibitem{Sc1}
R. Schmidt, \emph{Iwahori-spherical representations of $\GSp(4)$ and Siegel modular forms of degree 2 with square-free level},  J. Math. Soc. Japan  \textbf{57}
(2005),  no. 1, 259--293.


\bibitem{Sc3}
R. Schmidt, \emph{On classical Saito-Kurokawa liftings}, J. Reine Angew Math. \textbf{604} (2007), 211--236.

\bibitem{Se1}
A. Setyadi, \emph{Distance in the affine buildings of $\SL_n$ and $\Sp_n$}, Electronic Journal of Combinatorial Number Theory \textbf{8} (2008), A48, 1--17.


\bibitem{Se2}
A. Setyadi, \emph{Expanders and the affine building of $\Sp_n$}, Ars Combinatoria. Preprint, arXiv:0706.2272v2.


\bibitem{Sh}
T. Shemanske, \emph{The arithmetic and combinatorics of buildings for $\Sp_n$},  Trans. Amer. Math. Soc.  \textbf{359}  (2007),  no. 7, 3409--3423.

\bibitem{Shi}
G. Shimura, \emph{Introduction to the arithmetic theory of auromorphic functions}, Iwanami Shoten and Princeton Univ. Press, 1971.

\bibitem{ST}
H. Stark and A. Terras, \emph{Zeta functions of finite graphs and
coverings}, Adv. Math. {\bf 121} (1996), 124--165.

\bibitem{Ti}
J. Tits, \emph{Reductive groups over local fields}, Proc. Symp. Pure Math. {\bf 33}, Part 1, (1979), 29--69.





\end{thebibliography}
\end{document}